\theoremstyle{plain}
\newcommand{\Iso}{\mathrm{Iso}}
\newcommand{\w}{\omega}
\newcommand{\U}{\mathcal U}
\newcommand{\e}{\varepsilon}
\newcommand{\II}{\mathbb I}
\newcommand{\dens}{\mathrm{dens}}
\newcommand{\dist}{\mathrm{dist}}
\newcommand{\diam}{\mathrm{diam}}
\newcommand{\IR}{\mathbb R}
\newcommand{\HH}{\mathcal H}
\newtheorem{theorem}{Theorem}[section]
\newtheorem{corollary}[theorem]{Corollary}
\newtheorem{problem}[theorem]{Problem}
\newtheorem{lemma}[theorem]{Lemma}
\newtheorem{claim}[theorem]{Claim}
\theoremstyle{definition}
\newtheorem{example}[theorem]{Example}
\begin{document}

\title[Detecting Hilbert manifolds among homogeneous  spaces]{Detecting Hilbert manifolds among\\ isometrically homogeneous metric spaces}

\author[T.O.~Banakh and D.~ Repov\v s]{Taras O. Banakh and Du\v san Repov\v s}

\address{Department of Mathematics, Ivan Franko National University of Lviv, and\newline
Instytut Matematyki, Uniwersytet Humanistyczno-Przyrodniczy Jana Kochanowskiego w Kielcach, Poland}
\email{tbanakh@yahoo.com}

\address{Faculty of Mathematics and Physics, and
Faculty of Education,
University of Ljubljana,
P. O. Box 2964,
Ljubljana, Slovenia 1001}
\email{dusan.repovs@guest.arnes.si}
 
\subjclass[2010]{57N16; 57N20; 54E35}
\keywords{Hilbert manifold, ANR, Locally Finite Approximation Property,
isometrically homogeneous metric space, topological group, balanced subgroup}

\begin{abstract} 
We detect Hilbert manifolds among isometrically homogeneous metric spaces 
and apply the obtained results to recognizing Hilbert manifolds 
among homogeneous spaces of the form $G/H$,
where $G$ is a metrizable topological group
and $H$ is a closed balanced subgroup of $G$.
\end{abstract}
\date{\today}
\maketitle

\section{Introduction}

The problem of detecting topological groups that are locally homeomorphic 
to (finite or infinite)-dimensional
Hilbert spaces traces its history back
to the fifth problem of David Hilbert 
concerning
the recognition of Lie groups in the class of topological groups. 
This problem was resolved by combined efforts of 
A.~Gleason \cite{Gle}, 
D.~Montgomery, L.~Zippin \cite{MZ}, 
and K.~Hofmann \cite{Hofmann}. 
According to their results, a topological group $G$ is a Lie group 
if and only if $G$ is locally compact and locally contractible. 
In this case $G$ is an Euclidean manifold, that is, 
a manifold modeled on an Euclidean space $\IR^n$.

The next step was made in 1981 by T.~Dobrowolski and H.~Toru\'nczyk \cite{DT}. 
They proved that a topological group $G$ is a manifold modeled on a separable
Hilbert space if and only if $G$ is a locally Polish ANR. 
A topological space is called {\em locally Polish} if each
point $x\in X$ has a Polish (i.e. separable completely metrizable) neighborhood.

Most recently, T.~Banakh and I.~Zarichnyy \cite{BZ} proved in 2008 
that a topological group $G$ is a manifold modeled on an 
infinite-dimensional Hilbert space if and only if $G$ is a completely metrizable ANR with LFAP.
A topological space $X$ is said to have  
{\em Locally Finite Approximation Property} (abbreviated  LFAP) 
if for every open cover $\U$ there are maps $f_n:X\to X$, $n\in\w$, 
such that each $f_n$ is $\U$-near to the identity map and the 
indexed family $\{f_n(X)\}_{n\in\w}$ is locally finite in $X$. 
This property was crucial in Toru\'nczyk's characterization \cite{Tor81} 
of non-separable Hilbert manifolds. 
\medskip

By the Birkhoff-Kakutani Metrization Theorem \cite[2.5]{Tk}, 
the topology of any first countable topological group $G$ is 
generated by a left-invariant metric. This metric turns $G$ 
into an isometrically homogeneous metric space.
We define a metric space $X$ to be {\em isometrically homogeneous} if for any 
two points $x,y\in X$ there is a bijective isometry $f:X\to X$ 
such that $f(x)=y$. This notion is a metric analogue
of the well-known notion of a topologically homogeneous spaces.
We recall that a topological space $X$ is called {\em topologically 
homogeneous} if for any two points $x,y\in X$ there is a homeomorphism
$f:X\to X$ such that $f(x)=y$.

In light of the mentioned results the following open problem arises naturally:

\begin{problem} \label{prob1} 
How can one detect Euclidean and Hilbert manifolds among isometrically homogeneous metric spaces?
\end{problem}

For the Euclidean case of this problem we have the following answer 
which will be derived in Section~\ref{tbg}
from a result of J.~Szenthe \cite{Szenthe}. 

\begin{theorem}\label{t1} 
An isometrically homogeneous metric space $X$ is an Euclidean manifold if and only if 
$X$ is locally compact and locally contractible.
\end{theorem}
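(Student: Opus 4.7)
Necessity is immediate, since every Euclidean manifold is locally homeomorphic to $\IR^n$ and is therefore both locally compact and locally contractible.

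For sufficiency, assume $X$ is isometrically homogeneous, locally compact, and locally contractible. The plan is to realize $X$ as a coset space $G/H$ where $G$ is a Lie group and $H$ is a compact subgroup; such a coset space is automatically a smooth manifold, and in particular a Euclidean manifold modeled on $\IR^{\dim G-\dim H}$. The first step is to apply the announced theorem of Szenthe: for a locally compact isometrically homogeneous metric space $X$, the isometry group $G=\Iso(X)$ equipped with the compact-open topology is a locally compact topological group, it acts transitively and properly on $X$ with compact point stabilizer $H=\{g\in G:gx_0=x_0\}$, and the orbit map $gH\mapsto gx_0$ is a homeomorphism $G/H\to X$. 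Factoring out the kernel of the action, which is a closed normal subgroup of $G$ contained in $H$, we may further assume that $G$ acts effectively on $X$.

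Once this structure is in place, the problem reduces to proving that the locally compact group $G$ is a Lie group: once this is known, $G/H$ is a smooth manifold for any closed subgroup $H$. To force $G$ to be Lie, I would invoke the Gleason--Montgomery--Zippin--Hofmann characterization cited in the introduction, which asserts that a locally compact, locally contractible topological group is a Lie group. Since $G$ is already locally compact, it suffices to transport the local contractibility of $X\cong G/H$ back to $G$ itself. Because $H$ is compact, a local cross-section of the quotient map $G\to G/H$ exists, which realizes a neighborhood of $e$ in $G$ as homeomorphic to $H\times V$ with $V$ a small contractible neighborhood of $x_0$ in $X$.

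The main obstacle is controlling the compact factor $H$: a priori a compact subgroup of a locally compact group is only an inverse limit of compact Lie groups and need not be locally contractible, which would block a direct contraction of $H\times V$. To circumvent this, I would appeal to the Iwasawa--Yamabe approximation of $G$ by Lie groups, combined with the fact that $X$ is an ANR (being locally compact and locally contractible), to show that any sufficiently small compact normal subgroup of $G$ contained in $H$ acts trivially on $X$ and hence, by the effectivity arranged above, must be trivial. This rules out the pathological inverse-limit part of $H$, forcing $G$ to be a Lie group, and then $X=G/H$ is a Euclidean manifold, completing the proof.
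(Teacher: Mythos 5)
Your overall skeleton --- pass to the transitive action of $G=\Iso(X)$, identify $X$ with $G/H$ for a compact stabilizer $H$, and reduce to showing $G$ is a Lie group --- is the same as the paper's setup, but two remarks on the preliminaries first. The statement that for every locally compact isometrically homogeneous metric space $\Iso(X)$ is locally compact and acts properly is not Szenthe's theorem but the van Dantzig--van der Waerden theorem (with Manoussos--Strantzalos for properness), and it needs connectedness: an infinite uniformly discrete space (all distances $1$) is isometrically homogeneous, locally compact and locally contractible, yet its isometry group is the full symmetric group, which is not locally compact and whose point stabilizers are not compact. The paper first passes to a connected component (components are open by local contractibility and are themselves isometrically homogeneous); you omit this easy but necessary reduction. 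More importantly, at the point where one has an effective transitive proper action of a locally compact group on a connected, locally compact, locally contractible space, the paper simply cites Theorem 3 of Szenthe, which says exactly that the group is then a Lie group and the space a Euclidean manifold; you instead attempt to re-prove that theorem, and it is there that your argument has genuine gaps.

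Concretely: (i) ``because $H$ is compact, a local cross-section of $G\to G/H$ exists'' is false in general. For $G=SU(2)^{\w}$ and $H$ the product of maximal tori, $G/H\cong (S^2)^{\w}$, and a local section over a basic open set would produce a global section of the Hopf bundle $SU(2)\to S^2$ in all but finitely many coordinates, which does not exist; local sections in this context are usually obtained only after one already knows $H$ (or $G$) is Lie, i.e.\ after the very fact you are trying to prove. (ii) The claim that every sufficiently small compact normal subgroup furnished by Yamabe's theorem must act trivially on $X$ ``because $X$ is an ANR'' is not a formal consequence of smallness plus ANR-ness: arbitrarily small compact non-Lie groups act effectively on compact ARs --- for instance the tail subgroups $\prod_{n\ge m}\mathbb Z/2$ of the Cantor group act effectively on the Hilbert cube $\prod_{n\ge 1}[-1/n,1/n]$ by coordinatewise sign changes, moving every point by at most $2/m$. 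Excluding such behaviour in the transitive situation (using normality of the small subgroup $N$, the fact that $X/N$ is a homogeneous space of an almost-Lie quotient, Newman-type theorems and dimension theory) is precisely the hard content of Szenthe's theorem --- a step delicate enough that a gap was later found in Szenthe's own argument and complete proofs appeared only much later --- so your sketch assumes the essential step rather than proving it. The paper's intended derivation of Theorem~\ref{t1} is much shorter: local compactness plus isometric homogeneity gives completeness (hence local precompactness) and local Polishness, so Theorem~\ref{szenthe} applies, and its proof ends by quoting Szenthe's result as a black box rather than reconstructing it.
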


The Hilbert case
of Problem~\ref{prob1} is more difficult. We shall answer this problem under 
an addition assumption that the isometrically homogeneous space is $\II^{<\w}{\sim}$homogeneous.
The class of such spaces includes all metric groups 
(that is, topological groups endowed with an admissible  left-invariant metric) 
and also quotient spaces $G/H$ of metric groups $G$ by closed balanced subgroups
$H\subset G$ (cf. Corollary~\ref{cor1}).

To introduce $\II^{<\w}{\sim}$homogeneous metric spaces, let us first observe that a metric space $X$ is isometrically homogeneous if and only if the action of the isometry group $\Iso(X)$ on $X$ is transitive. This is equivalent to saying that for each point $\theta\in X$ the map $$\alpha_\theta:\Iso(X)\to X,\;\;\alpha_\theta:f\mapsto f(\theta),$$
is surjective. 

It is well-known (and easy to check) that the isometry group $\Iso(X)$ of a metric space $X$ is a topological group with respect to the topology of pointwise convergence (that is, the topology inherited from the Tychonov power $X^X$). Moreover, the natural action 
$$\alpha:\Iso(X)\times X\to X,\;\alpha:(f,x)\mapsto f(x),$$
of $\Iso(X)$ on $X$ is continuous.

Let $T$ be a topological space. We  define a map $q:X\to Y$ between topological spaces to be 
\begin{itemize}
\item {\em $T{-}$invertible} if for each continuous map $f:T\to Y$ there is a continuous map $g:T\to X$ such that $q\circ g=f$;
\item {\em $T{\sim}$invertible} if for each continuous map $f:T\to Y$ and an open cover $\U$ of $Y$ there is a continuous map $g:T\to X$ such that $q\circ g$ is $\U$-near to $f$ (in the sense that for each $t\in T$ there is $U\in\U$ with $\{f(t),q\circ g(t)\}\subset U$). 
\end{itemize}

Observe that a map $q:X\to Y$ is $\II^0{-}$invertible if and only if $q(X)=Y$ and $q$ is $\II^0{\sim}$invertible if and only if $q(X)$ is dense in $Y$ (here $\II^0$ is a singleton). 

We define a metric space $X$ to be {\em $T{-}$homogeneous} (resp. {\em $T{\sim}$homogeneous}), where $T$ is a topological space, if for some point $\theta\in X$ the map $\alpha_\theta:\Iso(X)\to X$ is $T$-invertible (resp. $T{\sim}$invertible).

Let us observe that each metric group $G$ (that is, a topological group endowed with an admissible  left-invariant metric) is a $G$-homogeneous metric space. This follows from the 
fact that for the neutral element $\theta$ of $G$ the map $\alpha_\theta:\Iso(G)\to G$ admits a continuous section $l:G\to \Iso(G)$ defined by $s:g\mapsto l_g$, where $l_g:x\mapsto gx$, is the left shift.

We shall be interested in the $T{-}$ and $T{\sim}$homogeneity in case $T$ is a (finite- or infinite-dimensional) cube $\II^n$. 
Observe that a metric space $X$ is $\II^0{-}$homogeneous if and only if $X$ is isometrically homogeneous, and $X$ is $\II^0{\sim}$homogeneous if and only if some point $\theta\in X$ has dense orbit under the action of the isometry group $\Iso(X)$.

On the other hand, a metric space $X$ is $\II^n{-}$homogeneous for all $n\in\w$ if and only if $X$ is $\II^{<\w}{-}$homogeneous for the topological sum $\II^{<\w}=\oplus_{n\in\w}\II^n$ of finite-dimensional cubes. A metric space $X$ is $\II^{<\w}{\sim}$homogeneous if and only if it is $\II^\w{\sim}$homogeneous. 
\smallskip

For each metric space $X$ those homogeneity properties relate as follows:
\smallskip

{
\begin{picture}(300,160)(-30,-28)

\put(9,110){metric group}

\put(170,110){topologically homogeneous}
\put(225,95){$\Uparrow$}
\put(35,94){$\Downarrow$}
\put(0,80){$X{-}$homogeneous}
\put(35,64){$\Downarrow$}
\put(170,80){isometrically homogeneous}
\put(225,64){$\Updownarrow$}

\put(0,50){$\II^\w{-}$homogeneous $\Rightarrow$ $\II^{<\w}{-}$homogeneous $\Rightarrow$ $\II^{0}{-}$homogeneous}
\put(35,34){$\Downarrow$}

\put(0,20){$\II^\w{\sim}$homogeneous $\Leftrightarrow$ $\II^{<\w}{\sim}$homogeneous $\Rightarrow$ $\II^{0}{\sim}$homogeneous}

\put(140,34){$\Downarrow$}
\put(225,34){$\Downarrow$}

\put(175,-13){\vector(-1,0){90}}
\put(38,13){\vector(0,-1){17}}
\put(41,4){\tiny +}
\put(50,10){\tiny{\it isometrically}}
\put(50,4){\tiny{\it homogeneous}} 
\put(50,-3){\tiny{\it Polish ANR}}
\put(228,13){\vector(0,-1){17}}

\put(240,8){\tiny{\it locally compact and}} 
\put(232,4){\tiny +}
\put(241,0){\tiny{\it  locally contractible}}
\put(2,-15){Hilbert manifold}
\put(185,-15){Euclidean manifold}
\end{picture}
}

The last two implications in the diagram hold under additional assumptions on the local structure of $X$ and are established in the following theorem that recognizes Hilbert manifolds among $\II^{<\w}{\sim}$homogeneous metric spaces (and will be proved in Section~\ref{pf-main}). 

\begin{theorem}\label{main} An isometrically homogeneous $\II^{<\w}{\sim}$homogeneous metric space $X$ is a manifold modeled on 
\begin{enumerate} 
\item an Euclidean space if and only if $X$ is locally precompact, locally Polish, and locally contractible;
\item a separable Hilbert space if and only if $X$ is a locally Polish ANR;
\item an infinite-dimensional Hilbert space if and only if $X$ is  completely-metrizable ANR with LFAP.
\end{enumerate}
\end{theorem}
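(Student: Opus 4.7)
Necessity of the listed conditions is routine: a manifold modeled on $\IR^n$, a separable Hilbert space, or a non-separable Hilbert space inherits from its model space all the relevant local properties (local compactness, local Polishness, local contractibility, the ANR property, and LFAP in the infinite-dimensional case). So I concentrate on the sufficiency direction in each case.

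For (1) the plan is to reduce to Theorem~\ref{t1}. The first step is to upgrade local precompactness plus local Polishness to local compactness: at any point, a precompact neighborhood intersected with a Polish neighborhood has compact closure, via the complete metric compatible with the Polish topology. Once $X$ is locally compact and locally contractible, Theorem~\ref{t1} identifies it as an Euclidean manifold. Isometric homogeneity alone suffices here; $\II^{<\w}{\sim}$homogeneity is not used.

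For (2) and (3) the plan is to invoke Toru\'nczyk's characterization of Hilbert manifolds (the separable version from 1981 for case (2); the non-separable version for case (3)). Beyond the given ANR, LFAP, and complete-metrizability hypotheses, the remaining ingredient is the \emph{discrete approximation property}: for every sequence of maps $f_k:\II^{n_k}\to X$ and every open cover $\U$ of $X$ there exist $\U$-close approximations $g_k:\II^{n_k}\to X$ whose images $\{g_k(\II^{n_k})\}_{k\in\w}$ form a locally finite family in $X$. This is where $\II^{<\w}{\sim}$homogeneity enters: each $f_k$ admits an approximate continuous lift $\tilde f_k:\II^{n_k}\to\Iso(X)$ with $\alpha_\theta\circ\tilde f_k$ close to $f_k$. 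The problem thus transports into the topological group $\Iso(X)$, where the group structure together with LFAP of $X$ can be leveraged---by pre-multiplying each $\tilde f_k$ by carefully chosen isometries $h_k$---to scatter the pushed-down images $g_k:=\alpha_\theta\circ(h_k\cdot\tilde f_k)$.

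The principal obstacle is the coherent simultaneous scattering: the lifts $\tilde f_k$ and the translating isometries $h_k$ must be chosen so that every $g_k$ remains $\U$-close to $f_k$ while the entire family $\{g_k(\II^{n_k})\}_{k\in\w}$ is genuinely locally finite in $X$. This is a bookkeeping argument in the spirit of the Dobrowolski--Toru\'nczyk and Banakh--Zarichnyy proofs, and it becomes available to us only because $\II^{<\w}{\sim}$homogeneity transports the problem from $X$---which carries no group structure of its own---into the isometry group $\Iso(X)$, where classical topological-group techniques apply and can then be pushed back down to $X$ via the orbit map $\alpha_\theta$.
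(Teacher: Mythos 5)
Your reduction in case (1) contains a genuine gap. You claim that a precompact neighborhood intersected with a Polish neighborhood has compact closure ``via the complete metric compatible with the Polish topology''; this is false, because total boundedness in the given metric does not transfer to the auxiliary complete metric, and the closure taken in $X$ need not be compact when $X$ is incomplete. The irrationals of $[0,1]$ with the Euclidean metric are precompact and Polish yet nowhere locally compact, so no purely local argument can upgrade ``locally precompact $+$ locally Polish'' to ``locally compact''. This upgrade is exactly the nontrivial content of the paper's Theorem~\ref{szenthe}: using isometric homogeneity one shows $X$ is \emph{complete}, by viewing $\Iso(X)$ inside the locally compact group $\Iso(\bar X)$ (which acts properly on the completion $\bar X$), proving that $\Iso(X)$ is coanalytic, hence has the Baire property, and running the dichotomy ``meager or closed'' to conclude $X=\bar X$. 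Note also that even the necessity of local precompactness in (1) is not routine: it is proved via homogeneity (an isometrically homogeneous locally compact metric space is complete), not inherited formally from the model space.

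In cases (2) and (3) there are two further gaps. First, your plan to verify Toru\'nczyk's discreteness condition outright cannot succeed in case (2): an isometrically homogeneous $\II^{<\w}{\sim}$homogeneous locally Polish ANR may be locally compact (e.g.\ the circle with its invariant metric, a metric group), and then no discrete or locally finite approximation of cube maps exists; the correct conclusion there is ``Euclidean manifold''. The paper handles this by a dichotomy: if $\bar X$ is locally compact, apply Theorem~\ref{szenthe}; if not, non-local-compactness together with isometric homogeneity plants an infinite separated set in every non-empty open set, which is the hypothesis of Lemma~\ref{l4}. Second, the step you describe as ``pre-multiplying by carefully chosen isometries $h_k$ \dots\ bookkeeping'' is the heart of the proof and is not supplied. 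The paper's mechanism (Lemma~\ref{l4}) is: fix a separated set $S$ of cardinality $\kappa$ in a small ball around the point $\theta$ with $\alpha_\theta$ approximately invertible, lift a given $f:\II^d\to X$ approximately to $g:\II^d\to\Iso(X)$, and select a single $s\in S$ (by a counting argument when $\kappa$ is uncountable, by a compactness argument when $\kappa=\w$) so that $x\mapsto \alpha(g(x),s)$ stays at distance $\ge\delta$ from the prescribed small set $A$; a short path from $\theta$ to $s$ produces the $\e$-homotopy. This yields the $\kappa$-separated, hence $\kappa$-discrete, $m$-cells property. In case (3) there is additional cardinal bookkeeping you omit entirely: one takes $\kappa$ to be the density of a connected component, shows every non-empty open set contains separated sets of cardinality $\lambda$ for all $\lambda\le\kappa$ of uncountable cofinality, and only then uses LFAP and Lemma~\ref{l1} to recover the full $\kappa$-discrete cells property before invoking Theorem~\ref{tor1}.
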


We explain some of the notions appearing in this theorem. 
A metric space is said to be
({\em locally}) {\em precompact} if its completion is (locally) compact.
A topological space $X$ is called
{\em locally Polish} if each point of $X$ has a Polish (= separable completely-metrizable) neighborhood; $X$ is said to be 
{\em completely-metrizable} if its topology is generated by a complete metric.
ANR is the standard
abbreviation for the absolute neighborhood retracts
in the class of metrizable spaces. 
\medskip

\section{Detecting Hilbert manifolds among quotient spaces of topological groups}

In this section we shall apply Theorem~\ref{main} to detecting Hilbert manifolds among homogeneous spaces of the form $G/H=\{xH:x\in G\}$ where $H$ is a closed subgroup of a topological group $G$ and $G/H$ is endowed with the quotient topology. 
We define a subgroup $H$ of a topological group $G$ to be {\em balanced} if for every neighborhood $U\subset G$ of the neutral element $e\in G$ there is a neighborhood $V\subset G$ of $e$ such that $HV\subset UH$. 

\begin{corollary}\label{cor1} Let $H\subset G$ be a balanced closed subgroup of a metrizable topological group $G$ such that the quotient map $q:G\to G/H$ is  $\II^{<\w}{\sim}$invertible. The space $G/H$ is a manifold modeled on 
\begin{enumerate}
\item an Euclidean space if and only if $G/H$ is locally compact and locally contractible;
\item a separable Hilbert space if and only if $G/H$ is a locally Polish ANR;
\item an infinite-dimensional Hilbert space if and only if $G/H$ is a completely-metrizable ANR with LFAP.
\end{enumerate}
\end{corollary}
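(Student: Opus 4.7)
The natural strategy is to reduce Corollary~\ref{cor1} to Theorem~\ref{main} by equipping $G/H$ with an admissible metric under which it becomes both isometrically homogeneous and $\II^{<\w}{\sim}$homogeneous. Once this is accomplished, the three equivalences are immediate: the properties appearing in (1)--(3) (local compactness, local contractibility, local Polishness, being an ANR, LFAP, complete metrizability) are topological invariants of $G/H$ and therefore do not depend on the choice of admissible metric.

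The first step is to exploit the balanced hypothesis to produce a left-invariant metric $d$ on $G$ that is additionally invariant under right translations by elements of $H$, i.e.\ $d(xh, yh) = d(x,y)$ for all $x,y\in G$ and $h\in H$. This is a Birkhoff--Kakutani style construction: the balanced condition provides, for any neighborhood $U$ of the identity, a neighborhood $V$ of the identity with $HV\subset UH$ (and, by passing to inverses, also $VH\subset HU$), so one can inductively select symmetric neighborhoods $V_n$ of $e$ with $V_{n+1}^3\subset V_n$ that are ``$H$-conjugation compatible'' in the sense that $HV_{n+1}\subset V_nH$ and $V_{n+1}H\subset HV_n$. The standard chain construction applied to this sequence yields the desired left-invariant, $H$-right-invariant metric $d$. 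Setting
$$\tilde d(xH,yH) \;=\; \inf_{h\in H} d(x, yh),$$
the two-sided invariance together with closedness of $H$ ensures that $\tilde d$ is a metric on $G/H$ generating the quotient topology. Moreover, each left translation $L_g\colon xH\mapsto gxH$ of $G/H$ is an isometry of $(G/H,\tilde d)$, and these left translations act transitively; hence $(G/H,\tilde d)$ is isometrically homogeneous.

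To verify $\II^{<\w}{\sim}$homogeneity at the base point $\theta=eH$, observe that the map $L\colon G\to \Iso(G/H)$, $g\mapsto L_g$, is continuous in the pointwise convergence topology on $\Iso(G/H)$, and that $\alpha_\theta\circ L = q$. Given any continuous $f\colon \II^n\to G/H$ together with an open cover $\U$ of $G/H$, the hypothesis that $q$ is $\II^{<\w}{\sim}$invertible yields a continuous $g\colon \II^n\to G$ with $q\circ g$ $\U$-near to $f$; then $L\circ g\colon \II^n\to \Iso(G/H)$ satisfies $\alpha_\theta\circ(L\circ g) = q\circ g$, showing that $\alpha_\theta$ is $\II^{<\w}{\sim}$invertible. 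Having established both structural properties, Theorem~\ref{main} applied to $(G/H,\tilde d)$ yields the three desired equivalences.

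The main technical obstacle is the metrization step: producing a left-invariant metric on $G$ that is also invariant under right translations by $H$. This is precisely where the balanced hypothesis is essential --- in its absence, the quotient $G/H$ need not admit any admissible isometrically homogeneous metric, and the entire reduction strategy collapses. The remaining verifications (that $\tilde d$ induces the quotient topology, that $L$ is continuous, that the lifting argument goes through) are routine given the correct setup.
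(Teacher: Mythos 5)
Your overall architecture coincides with the paper's: metrize $G/H$ so that left translations act transitively by isometries, deduce the $\II^{<\w}{\sim}$homogeneity of $G/H$ from the $\II^{<\w}{\sim}$invertibility of $q$ by composing lifts with the continuous map $L:G\to\Iso(G/H)$, $g\mapsto L_g$, and then quote Theorem~\ref{main}; that last part of your argument is correct. The gap is in the metrization step. Balancedness does not yield a left-invariant admissible metric on $G$ that is also invariant under right translations by $H$: such a metric would satisfy $d(e,hgh^{-1})=d(e,g)$ for all $h\in H$, $g\in G$, so its balls would form a neighborhood basis at $e$ consisting of sets invariant under conjugation by $H$, and this can fail even for normal (hence automatically balanced) closed subgroups. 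Concretely, let $G=\IR\rtimes\IR$ with multiplication $(y_1,t_1)(y_2,t_2)=(y_1+e^{t_1}y_2,\,t_1+t_2)$ and $H=\IR\times\{0\}$. Then $H$ is closed and normal, so balanced, but conjugation by $(x,0)$ sends $(y,t)$ to $(y+x(1-e^t),t)$, so every $H$-conjugation-invariant neighborhood of the identity contains a whole line $\IR\times\{t\}$ with $t\ne 0$; such sets cannot form a neighborhood basis, hence no metric of the kind you postulate exists. The relations your induction actually extracts from balancedness, $HV_{n+1}\subset V_nH$ and $V_{n+1}H\subset HV_n$, only control conjugates modulo $H$ (from $hv=uh'$ one gets $hvh^{-1}\in UH$, not $hvh^{-1}\in U$), and the Birkhoff--Kakutani chain construction run on such a sequence does not produce right-$H$-invariance. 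Without that invariance your quotient distance $\tilde d(xH,yH)=\inf_{h\in H}d(x,yh)$ is not even well defined (it depends on the representative $x$), and the triangle inequality is unavailable, so the reduction to Theorem~\ref{main} collapses at this point.

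The paper avoids exactly this obstruction: it takes a bounded left-invariant metric $d$ on $G$ and endows $G/H$ with the Hausdorff metric $d_H$ on cosets viewed as closed subsets of $G$. Left invariance of $d$ alone already makes every left shift $xH\mapsto gxH$ a $d_H$-isometry, so $(G/H,d_H)$ is isometrically homogeneous with no invariance of $d$ under $H$ required; the balanced hypothesis is used only where it is genuinely needed, namely to show that the $d_H$-topology coincides with the quotient topology (one inclusion from $HV\subset UH$, the other from $HV\subset B_\e H$ and its inverse). If you replace your $\tilde d$ by this Hausdorff metric, the remainder of your argument --- the lift $L\circ g$ witnessing $\II^{<\w}{\sim}$invertibility of $\alpha_\theta$ and the appeal to Theorem~\ref{main} --- goes through essentially verbatim.
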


\begin{proof} By the Birkhoff-Kakutani Theorem \cite[2.5]{Tk}, the topology of $G$ is generated by a bounded left-invariant metric $d$. This metric induces the Hausdorff metric
$$d_H(A,B)=\max\{\sup_{a\in A}d(a,B),\sup_{b\in B}d(b,A)\}$$on the hyperspace $2^G$ of all non-empty closed subsets of $G$. 

Endow the quotient space $G/H=\{xH:x\in G\}$ with the Hausdorff metric $d_H$ and observe that for each $g\in G$ the left shift $l_g:G/H\to G/H$, $l_g:xH\mapsto gxH$, is an isometry of $G/H$. Therefore, the Hausdorff metric turns $G/H$ into an isometrically homogeneous metric space. 

We claim that this metric generates the quotient topology on $G/H$. Because of the homogeneity, it suffices to check that $d_H$ generates the quotient topology at the distinguished element $H$ of $G/H$. 

Fix a basic neighborhood $U\cdot H=\{uH:u\in U\}\subset G/H$ of $H$, where $U\subset G$ is a neighborhood of the neutral element $e$ in $G$. Since $H$ is balanced, there is a neighborhood $V\subset G$ of $e$ such that $HV\subset UH$. Find $\e>0$ such that 
$B_\e\subset V$ where $B_\e=\{x\in G:d(x,e)<\e\}$ is the $\e$-ball centered at $e$. Then for each coset $xH\in G/H$ with $d_H(xH,H)<\e$ we get $xH\subset HV\subset UH$. This shows that the topology on $G/H$ generated by the Hausdorff metric $d_H$ is stronger than the quotient topology.

Next, given any $\e>0$, use the balanced property of $H$ to find a neighborhood $V=V^{-1}\subset G$ of $e$ such that $HV\subset B_\e H$.
Then $VH=(HV)^{-1}\subset (B_\e H)^{-1}\subset HB_\e$. Consequently, for every $v\in V$ we get $vH\subset HB_{\e}$. Since $v^{-1}\in V$ we also get $v^{-1}H\subset HB_{\e}$ and $H\subset vHB_\e$. The inclusions $vH\in HB_{\e}$ and $H\subset vHB_\e$ imply that $d_H(H,vH)\le\e$. Consequently, $V\cdot H\subset\{g\in G:d_H(gH,H)\le\e<2\e\}$, which shows that the quotient topology on $G/H$ is stronger than the topology generated by the Hausdorff metric $d_H$ on $G/H$.
\smallskip

The $\II^{<\w}{\sim}$invertibility of the quotient map $q:G\to G/H$ implies the $\II^{<\w}{\sim}$homo\-geneity of the isometrically homogeneous metric space $(G/H,d_H)$. Now the statements (1)--(3) follow immediately from Theorem~\ref{main}.
\end{proof}

A topological space $X$ is defined to be $\LC[<\w]$ if for each point $x\in X$, each neighborhood $U\subset X$ of $x$, and every $k<\w$ there is a neighborhood $V\subset U$ of $x$ such that each map $f:S^k\to V$ is null homotopic in $U$. 

\begin{corollary}\label{cor2} Let $H\subset G$ be a completely-metrizable balanced $LC^{<\w}$-subgroup of a metrizable topological group $G$. The space $G/H$ is a manifold modeled on 
\begin{enumerate}
\item an Euclidean space if and only if $G/H$ is locally compact and locally contractible;
\item a separable Hilbert space if and only if $G/H$ is a locally Polish ANR;
\item an infinite-dimensional Hilbert space if and only if $G/H$ is a completely-metrizable ANR with LFAP.
\end{enumerate}
\end{corollary}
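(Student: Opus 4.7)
The plan is to reduce Corollary~\ref{cor2} to Corollary~\ref{cor1}: given the stronger hypothesis that $H$ is completely metrizable and $LC^{<\w}$, the task is to verify that the quotient map $q\colon G\to G/H$ is $\II^{<\w}{\sim}$invertible, since then Corollary~\ref{cor1} immediately yields the three equivalences.

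First I would fix an admissible left-invariant metric $d$ on $G$ via the Birkhoff--Kakutani theorem. Every left translation $l_g\colon G\to G$ is then a $d$-isometry mapping cosets onto cosets, so each coset $xH$ is isometric to $H$ through $l_{x^{-1}}$. Consequently, the family of cosets $\{xH:x\in G\}$ consists of mutually isometric, completely metrizable, $LC^{<\w}$ closed subsets of $G$ and is in particular equi-$LC^{<\w}$.

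Given a continuous $f\colon\II^n\to G/H$, I would then study the closed-valued multifunction $\Phi\colon\II^n\to G$, $\Phi(t)=q^{-1}(f(t))$. Since the quotient map of a topological group is open, $\Phi$ is lower semicontinuous: for every open $W\subset G$ the set $\{t\in\II^n:\Phi(t)\cap W\ne\emptyset\}$ equals $f^{-1}(q(W))$ and is open. Now apply the appropriate Michael-type approximate selection theorem for LSC multifunctions with complete equi-$LC^{<\w}$ values defined on a finite-dimensional paracompact domain: given any open cover $\U$ of $G/H$, pick an open cover $\U'$ of $G$ refining $q^{-1}(\U)$; the theorem produces a continuous $\U'$-approximate selection $g\colon\II^n\to G$ of $\Phi$, and $q\circ g$ is automatically $\U$-near $f$. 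This yields $\II^n{\sim}$invertibility of $q$ for every $n\in\w$, hence $\II^{<\w}{\sim}$invertibility, and Corollary~\ref{cor1} finishes the proof.

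The main obstacle is selecting the correct version of the selection theorem: an exact continuous selection would demand each coset to be $C^{<\w}$ rather than merely $LC^{<\w}$, which can fail since $H$ need not be connected. One must therefore invoke an approximate-selection theorem that uses only the equi-$LC^{<\w}$ and completeness hypotheses on the fibres---exactly the conditions supplied by the completely metrizable $LC^{<\w}$ assumption on $H$ combined with the isometric left action of $G$. A secondary technical point is the choice of refining cover $\U'$ of $G$, which must be compatible with the quotient topology of $G/H$; this is harmless because the quotient map is open.
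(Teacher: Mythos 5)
Your strategy is the same as the paper's: reduce to Corollary~\ref{cor1} by checking that $q\colon G\to G/H$ is (at least approximately) invertible over finite-dimensional cubes, via Michael's finite-dimensional selection theorem applied to the lower semicontinuous multifunction $t\mapsto q^{-1}(f(t))$ whose values are the cosets $xH$; your remark that exact selections would need the cosets to be $C^{<\w}$, and that $\II^{<\w}{\sim}$invertibility suffices for Corollary~\ref{cor1}, is well taken. However, the two steps you dispatch in one line are exactly where the paper does its work, and as written they are gaps. First, ``the cosets are mutually isometric and each is $LC^{<\w}$, hence the family is equi-$\mathrm{LC}^{<\w}$'' is not a valid inference: equi-$\mathrm{LC}^n$ is a condition at ambient points, uniform over all members of the family, and isometry of the members does not transfer a null-homotopy modulus from one ambient point to another. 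What makes it true here is extra structure you did not use: the identifications $l_{x^{-1}}$ are \emph{ambient} isometries for the left-invariant metric, and $H$ is itself isometrically homogeneous under its own left translations, so a single modulus of local $n$-connectedness at $e$ propagates to all cosets near any given point. The paper's verification is precisely this: given $x_0$ and a neighborhood $U(x_0)$, it chooses $U$ with $x_0U^2\subset U(x_0)$, then $W$ so that maps $S^n\to H\cap W$ are null-homotopic in $H\cap U$, then $V$ with $x_0^{-1}V^{-1}Vx_0\subset W$, and for a coset meeting $V(x_0)=Vx_0\cap x_0V$ it picks the representative $x$ inside $V(x_0)$ and translates by $l_{x^{-1}}$ into $H\cap W$. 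Some argument of this kind must appear; your ``in particular'' hides the entire content of the step.

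Second, completeness. Michael's theorem requires the values to be closed subsets of a \emph{complete} metric space (completeness is what makes the successive approximations converge into the fibers). The cosets are closed in $G$ and homeomorphic to the completely metrizable group $H$, but the left-invariant metric $d$ need not be complete on them, and $G$ itself is only assumed metrizable. The paper repairs this by passing to the two-sided metric $\rho(x,y)=d(x,y)+d(x^{-1},y^{-1})$ and its completion $\bar G$, which is a topological group; there the completely metrizable subgroup $H$, hence every coset, is closed, and the selection theorem is applied with the cosets viewed as closed subsets of $\bar G$ (equi-$\mathrm{LC}^n$ is a purely topological condition, so a verification carried out in $G$ still serves). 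Your phrase ``complete equi-$LC^{<\w}$ values'' conflates ``completely metrizable'' with ``complete in the given metric,'' so this step is missing from your argument; with these two points supplied, your proof matches the paper's.
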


\begin{proof} This corollary will follow from Corollary~\ref{cor1} as soon as we check that the quotient map $q:G\to G/H$ is $\II^{<\w}{-}$invertible. For this we shall apply the Finite-Dimensional Selection Theorem of E.Michael \cite{Mi}.

Let $d$ be a left-invariant metric generating the topology of the group $G$. This metric induces an admissible metric $\rho(x,y)=d(x,y)+d(x^{-1},y^{-1})$ on $G$. It is well-known that the completion $\bar G$ of $G$ by the metric $\rho$ has the structure of topological group. The subgroup $H\subset G\subset\bar G$, being completely-metrizable, is closed in $\bar G$. 

The $\II^{<\w}{-}$invertibility of the quotient map $q:G\to G/H$ will follow from the Michael Selection Theorem \cite{Mi} as soon as we check that the family $\{xH:x\in G\}$ is equi-$\LC[n]$ for every $n\in\w$. The latter means that for every $x_0\in G$ and a neighborhood $U(x_0)\subset G$ of $x_0$ there is another neighborhood $V(x_0)\subset U(x_0)$ of $x_0$ such that each map $f:S^n\to xH\cap V(x_0)$ from the $n$-dimensional sphere into a coset $xH\in G/H$, $x\in G$, is null homotopic in $xH\cap U(x_0)$.

Find a neighborhood $U\subset G$ of the neutral element $e$ of $G$ such that $x_0U^2\subset U(x_0)$. Since $H$ is $\LC[<\w]$, there is a neighborhood $W\subset G$ of $e$ such that each map $f:S^n\to H\cap W$ is null homotopic in $U\cap H$.
Find a neighborhood $V\subset U$ of $e$ such that $x_0^{-1}V^{-1}Vx_0\subset W$.

We claim that the neighborhood $V(x_0)=Vx_0\cap x_0V$ has the desired property. Indeed, fix any map $f:S^n\to xH\cap V(x_0)$ where $x\in V(x_0)$. Consider the left shift $l_{x^{-1}}:g\mapsto x^{-1}g$, and observe that $$l_{x^{-1}}\circ f(S^n)\subset H\cap x^{-1}V(x_0)\subset x_0^{-1}V^{-1}Vx_0\subset W.$$ Now the choice of $W$ ensures that the map $l_{x^{-1}}\circ f$ is null-homotopic in $H\cap U$ and hence $f$ is null-homotopic in $$xH\cap xU\subset xH\cap x_0VU\subset xH\cap x_0U^2\subset xH\cap U(x_0).$$
\end{proof}
 
Since the trivial subgroup is balanced, Corollary~\ref{cor2} implies the following three results due to K.Hofmann \cite{Hofmann}, T.Dobrowolski, H.Toru\'nczyk \cite{DT}, and T.Banakh, I.Zarichnyy \cite{BZ}, respectively. 

\begin{corollary}\label{cor3} A topological group $G$ is a manifold modeled on 
\begin{enumerate}
\item an Euclidean space if and only if $G$ is locally compact and locally contractible;
\item a separable Hilbert space if and only if $G$ is a locally Polish ANR;
\item an infinite-dimensional Hilbert space if and only if $G$ is a completely-metrizable ANR with LFAP.
\end{enumerate}
\end{corollary}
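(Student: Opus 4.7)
The plan is to apply Corollary~\ref{cor2} to the trivial closed subgroup $H=\{e\}$ of $G$. First I would verify that $\{e\}$ satisfies all hypotheses of that corollary. It is closed in $G$ and completely metrizable as a one-point space. It is balanced, because for any neighborhood $U$ of $e$ one has $\{e\}V = V$, so taking $V\subset U$ gives $\{e\}V\subset U\{e\}$. The $LC^{<\omega}$ property holds trivially: every continuous map from a sphere $S^k$ into the one-point space $\{e\}$ is constant, hence null-homotopic in any neighborhood.

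Next I would observe that the quotient space $G/\{e\}$ is naturally homeomorphic to $G$, via the bijection $g\mapsto \{g\}$; the quotient topology on $G/\{e\}$ coincides with the original topology of $G$. Under this identification, each of the three local properties listed in (1)--(3) of Corollary~\ref{cor2} becomes the corresponding local property of $G$, and the conclusion ``$G/H$ is a manifold modeled on \dots'' becomes ``$G$ is a manifold modeled on \dots''.

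Consequently each of (1)--(3) follows directly by specializing Corollary~\ref{cor2}. I do not anticipate any substantive obstacle: all the delicate work (the Birkhoff--Kakutani metrization, the verification that the Hausdorff metric generates the quotient topology, and the application of Michael's selection theorem to establish $\II^{<\w}{-}$invertibility of $q$) has already been carried out in the proofs of Corollaries~\ref{cor1} and~\ref{cor2}; the present corollary is simply their specialization to the trivial subgroup, recovering the classical results of Hofmann, Dobrowolski--Toru\'nczyk, and Banakh--Zarichnyy.
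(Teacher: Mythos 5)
Your proposal is exactly the paper's argument: the authors derive Corollary~\ref{cor3} in one line by noting that the trivial subgroup is balanced (and trivially closed, completely metrizable and $\LC[<\w]$) and applying Corollary~\ref{cor2}, identifying $G/\{e\}$ with $G$. Your write-up just makes the routine verifications explicit, so it is correct and essentially identical to the paper's proof.
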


In should be mentioned that the requirement on the subgroup $H\subset G$ to be balanced is essential in Corollaries~\ref{cor1} and \ref{cor2}.

\begin{example} By \cite{Fer}, the homeomorphism group $\HH(\II^\w)$ of the Hilbert cube $\II^\w$ is a Polish ANR. Moreover, by Corollary 4.12 of \cite{Fer}, for any point $\theta\in \II^\w$ the closed subgroup $\HH_\theta(\II^\w)=\{h\in\HH(\II^\w):h(\theta)=\theta\}\subset\HH(\II^\w)$ is an ANR as well. This subgroup is not balanced because otherwise the Hilbert cube  $\II^\w=\HH(\II^\w)/\HH_\theta(\II^\w)$ would be an Euclidean manifold by Corollary~\ref{cor2}(1).

Next, we show that the quotient map $q:\HH(\II^\w)\to \HH(\II^\w)/\HH_\theta(\II^\w)$ is $\II^\w{\sim}$invertible but not $\II^\w{-}$invertible. The $\II^\w{\sim}$invertibility of  $q$ follows from the $\LC[<\w]$-property of the subgroup $\HH_\theta(\II^\w)$ and Finite-Dimensional Michael Selection Theorem \cite{Mi}. On the other hand, the fixed point property of $\II^\w$ implies that the quotient map $q$ is not $\II^\w$-invertible. Indeed, assuming that $q$ has a section $s:\II^\w\to \HH(\II^\w)$, $s:x\mapsto s_x\in \HH(\II^\w)$, and taking any homeomorphism $g\in \HH(\II^\w)\setminus \HH_\theta(\II^\w)$, we would get a continuous map $f:\II^\w\to \II^\w$, $f:x\mapsto q(s_x\circ g)$, without fixed point. Indeed, assuming that $f(x)=x$ for some $x\in\II^\w$, we would get $$x=f(x)=q(s_x\circ g)=s_x\circ g(\theta).$$ Since $x=q(s_x)=s_x(\theta)$, this would imply that $g(\theta)=\theta$ and hence $g\in\HH_\theta(\II^\w)$, which contradicts the choice of the homeomorphism $g$.
\end{example}

\begin{problem} Let $H$ be a closed subgroup of a Polish ANR-group $G$ such that the quotient map $q:G\to G/H$ is a locally trivial bundle. Is the quotient space $G/H$ a Hilbert manifold?
\end{problem}

Another related problem was posed in \cite{HR}:

\begin{problem} Let $H$ be a closed ANR-subgroup of a Polish ANR group $G$. Is $G/H$ a manifold modeled on a Hilbert space or the Hilbert cube?
\end{problem}

\section{Locally precompact isometrically homogeneous metric spaces}\label{tbg}

In this section we shall study locally precompact isometrically homogeneous metric spaces. We recall that  a metric space is locally precompact if its completion is locally compact. The following theorem implies Theorems~\ref{t1} announced in the introduction.

\begin{theorem}\label{szenthe} An isometrically homogeneous metric space $X$ is an Euclidean manifold if and only if $X$ is locally precompact, locally Polish, and locally contractible.
\end{theorem}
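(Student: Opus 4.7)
My plan is as follows. The forward direction is immediate: a Euclidean manifold is locally homeomorphic to $\IR^n$ and so is locally compact, locally Polish, and locally contractible. For the reverse direction, I would aim to reduce to the theorem of J.~Szenthe \cite{Szenthe}, which asserts that a locally compact, locally contractible metric space admitting a transitive continuous action by a locally compact group is a finite-dimensional topological manifold.

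So assume $X$ is isometrically homogeneous, locally precompact, locally Polish, and locally contractible. The first reduction is to the case where $X$ is connected and separable. By isometric homogeneity, all connected components of $X$ are mutually isometric and each is itself isometrically homogeneous; local Polishness yields local separability, and a connected locally separable metric space is separable. Since the conclusion is local, it suffices to treat a single component, so I may assume $X$ is connected and separable.

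The heart of the proof is upgrading ``locally precompact'' to ``locally compact'', i.e.\ proving that the given metric on $X$ is complete, or equivalently that $X$ coincides with its metric completion $\bar X$. The space $\bar X$ is locally compact and (by separability) Polish. Every isometry of $X$ extends uniquely to an isometry of $\bar X$, so the image $\tilde G$ of $\Iso(X)$ in $\Iso(\bar X)$ preserves $X$ setwise and acts transitively on $X$, while its closure $G=\overline{\tilde G}$ is a locally compact Polish group acting continuously on $\bar X$ with dense orbit containing $X$. Local Polishness provides a neighbourhood of $x_0\in X$ that is $G_\delta$ in $\bar X$, so that $X$ is a ``topologically large'' $\tilde G$-invariant dense subset. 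I would then invoke an Effros/Ungar-type openness theorem for continuous actions of Polish groups on Polish spaces to deduce that the $G$-orbit of $x_0$ is open in $\bar X$, and hence (by density of $X$) equal to $\bar X$, forcing $X=\bar X$. This synthesis of isometric homogeneity, local precompactness, and local Polishness is the step I expect to be the main obstacle, since none of these ingredients on its own is enough: local precompactness merely embeds $X$ into a locally compact $\bar X$, while local Polishness alone gives only a $G_\delta$-flavoured description of $X$ in $\bar X$, and only the homogeneity allows one to spread this control to every point of $\bar X$.

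Once local compactness of $X$ is established, the Dantzig--van~der~Waerden theorem identifies $\Iso(X)$, endowed with the topology of pointwise convergence, as a locally compact Polish topological group acting continuously and transitively on $X$. Szenthe's theorem then shows that $X$ is a topological manifold modelled on some $\IR^n$, i.e.\ a Euclidean manifold, which completes the proof.
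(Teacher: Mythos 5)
The main gap is in the heart of your reverse direction, the step ``$X=\bar X$''. You pass from $\tilde G=\{f\in\Iso(\bar X):f(X)=X\}$ to its closure $G=\overline{\tilde G}$ and invoke an Effros-type openness theorem for the Polish (locally compact) group $G$ acting on $\bar X$. But the conclusion you extract --- that the $G$-orbit of $x_0$ is open, hence all of $\bar X$ --- is both nearly automatic (the action of $\Iso(\bar X)$ on the connected locally compact space $\bar X$ is proper, so orbits are closed, and a closed dense orbit is everything) and insufficient: elements of $G$ need not map $X$ into $X$, so $Gx_0=\bar X$ says nothing about whether $X=\bar X$. What must be proved is that the orbit of $x_0$ under $\tilde G$ itself, which is exactly $X$, exhausts $\bar X$; and Effros/Ungar-type theorems require the acting group to be Polish (or at least analytic with the Baire property), whereas $\tilde G$ is not known to be closed in $\Iso(\bar X)$ --- that is precisely what is at stake. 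The paper supplies the missing definability input: it shows $\tilde G$ is coanalytic in $\Iso(\bar X)$ (writing $\bar X\setminus X=\bigcup_n K_n$ with $K_n$ compact, using that the locally Polish $X$ is $G_\delta$ in $\bar X$, and pulling back the coanalytic set $\exp(\bar X\setminus X)$ under the maps $f\mapsto f(K_n)\cup f^{-1}(K_n)$), hence $\tilde G$ has the Baire property and by \cite[9.9]{Ke} is either meager or closed. If closed, the closedness of the proper orbit map gives $X=\alpha_{x_0}(\tilde G)$ closed and dense, so $X=\bar X$; if meager, openness of $\alpha_{x_0}:G\to\bar X$ (properness plus local compactness) would make $X$ meager in $\bar X$, contradicting that $X$ is a dense $G_\delta$. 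Without some such argument about $\tilde G$ itself your plan does not close; as written, the Effros appeal proves a statement about the wrong group.

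A secondary but genuine gap is the forward direction, which you call immediate and state with ``locally compact''. The hypothesis to be verified is local \emph{pre}compactness, i.e. local compactness of the completion, which is a metric property and does not follow from $X$ being a Euclidean manifold (a locally compact metric space can have a completion that is not locally compact). The paper uses the isometric homogeneity here: a compact closed ball $B(\theta,\e)$ can be translated by isometries near any point of $\bar X$, which shows $\bar X=X$, i.e. the metric is complete, and only then does local compactness give local precompactness. Your reductions to a connected (separable) component, the use of van Dantzig--van der Waerden for local compactness of $\Iso(\bar X)$, and the final appeal to Szenthe's theorem are in line with the paper.
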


\begin{proof} If  $X$ is an Euclidean manifold, then $X$ is locally compact and locally contractible. The local precompactness of $X$ will follow as soon as we show that $X$ is complete.
Take any point $x_0$ in the completion $\bar X$ of the metric space $X$.  Fix any point $\theta\in X$ and by the local compactness of $X$, find an $\e>0$ such the closed $\e$-ball $B(\theta,\e)=\{x\in X:\dist(x,\theta)\le\e\}$ is compact. Since $X$ is isometrically homogeneous, there is a homeomorphism $f:X\to X$ such that $\dist(x_0,f(\theta))<\e/2$. It follows that the $\e$-ball $B(f(\theta),\e)=\{x\in X:\dist(x,f(\theta))\le\e\}$ contains the point $x_0$ in its closure in $\bar X$. Since $B(f(\theta),\e)=f(B(\theta,\e))$ is compact, $x_0\in B(f(\theta),\e)\subset X$. 
Thus $X=\bar X$ is a complete metric space. Being locally compact, this space is locally precompact.
\smallskip

Now assume that $X$ is locally precompact, locally Polish, and locally contractible. We need to show that $X$ is an Euclidean manifold. It suffices to check that each connected component of $X$ is an Euclidean manifold. Since connected components of $X$ are isometrically homogeneous, we lose no generality assuming that $X$ is connected.

The completion $\bar X$ of the locally precompact space $X$ is locally compact.
By \cite{DW} (cf. also \cite[Th.I.4.7]{KN}), the isometry group $\Iso(\bar X)$ of $\bar X$ is locally compact, metrizable, and separable. Moreover, for every point $\theta\in X$ the action $$\alpha_\theta:\Iso(\bar X)\to \bar X$$ is proper in the sense that it is closed and the stabilizer $\Iso(\bar X,\theta)=\{f\in\Iso(\bar X):f(\theta)=\theta\}$ is compact (cf.  \cite{MS}). 

We are going to prove that the metric space $X$ is complete. For this consider the subgroup
$$\Iso(X)=\{f\in\Iso(\bar X):f(X)=X\}\subset\Iso(\bar X)$$ 
in the group $\Iso(\bar X)$. 

Let us show that the subgroup $\Iso(X)=\{f\in\Iso(\bar X):f(X)=X\}$ of $\Iso(\bar X)$ 
is coanalytic. The subspace $X$ of $\bar X$, being (locally) Polish, is a $G_\delta$-set in $\bar X$. Consequently, its complement $\bar X\setminus X$ can be written as the countable union $\bar X\setminus X=\bigcup_{n\in\w}K_n$ of non-empty compact sets. Observe that $$\Iso(X)=\bigcap_{n\in\w}\{f\in\Iso(\bar X):f(K_n)\cup f^{-1}(K_n)\subset \bar X\setminus X\}.$$ Let $\exp(\bar X)$ be the space of non-empty compact subset of $\bar X$ endowed with the Hausdorff metric. For every $n\in\w$ consider the continuous maps 
$$\xi_n:\Iso(\bar X)\to\exp(\bar X),\;\;\;\xi_n:f\mapsto f(K_n)\cup f^{-1}(K_n)
.$$

By \cite[33.B]{Ke}, the subspace $\exp(\bar X\setminus X)=\{K\in\exp(\bar X):K\subset\bar X\setminus X\}$ is coanalytic and so is its preimage $\xi_n^{-1}(\exp(\bar X\setminus X))$ for $n\in\w$. Since $$\Iso(X)=\bigcap_{n\in\w}\xi_n^{-1}(\exp(\bar X\setminus X)),$$ we see that the subgroup $\Iso(X)$ is coanalytic in $\Iso(\bar X)$. Then $\Iso(X)$ has the Baire property in $G$ and hence either is meager or is closed in $\Iso(\bar X)$ according to \cite[9.9]{Ke}.

If $\Iso(X)$ is closed in $\Iso(\bar X)$, then $X=\alpha_\theta(\Iso(X))=\bar X$ (because the map $\alpha_\theta:\Iso(\bar X)\to\bar X$ is closed) and we are done.
It remains to prove that the assumption that $\Iso(X)$ is meager leads to a contradiction. Let $G$ be the closure of the subgroup $\Iso(X)$ in $\Iso(\bar X)$.

Taking into account that the map $\alpha_\theta:\Iso(\bar X)\to \bar X$ is closed and $X=\alpha_\theta(\Iso(X))$ is dense in $\bar X$, we conclude that $\bar X =\alpha_\theta(G)$. It follows from the local compactness of $G$ and the properness of the action $\alpha_\theta:G\to\bar X$ that the map $\alpha_\theta$ is open. Then the image $\alpha_\theta(\Iso(X))=X$ of the meager subgroup $\Iso(X)$ of $G$ is a meager subset of $\bar X$, which is not possible as $X$ is a dense $G_\delta$-set in $\bar X$.
This contradiction completes the proof of the completeness of $X$.

Now we see that the connected locally compact locally contractible space $X=\bar X$ admits an effective transitive action of the locally compact group $\Iso(X)$. By Theorem~3 of J.~Szenthe \cite{Szenthe}, $\Iso(X)$ is a Lie group and $X$ is an Euclidean manifold.
\end{proof}

\section{Characterizing the topology of Hilbert manifolds}

In order to prove Theorem~\ref{main}(2,3) we shall apply  the celebrated
Toru\'nczyk's characterization of the topology of infinite-dimensional Hilbert manifolds. The key ingredient of this characterization is the $\kappa$-discrete $m$-cells property defined for cardinals $\kappa$ and $m$ as follows.

We say that a topological space $X$ satisfies the {\em $\kappa$-discrete $m$-cells property} if for every map $f:\kappa\times \II^m\to X$ and every open cover $\U$ of $X$ there is a map $g:\kappa\times \II^m\to X$ such that $g$ is $\U$-near to $f$ and the family $\big\{g(\{\alpha\}\times \II^m)\big\}_{\alpha\in\kappa}$ is discrete in $X$ (here we identify the cardinal $\kappa$ with the discrete space of all ordinals $<\kappa$).

The following characterization theorem is due to H.Toru\'nczyk \cite{Tor81}.

\begin{theorem}[Toru\'nczyk]\label{tor1} A metrizable space $X$ is a manifold modeled on an infinite-dimensional Hilbert space $l_2(\kappa)$ of density $\kappa\ge\w$ if and only if $X$ has the following properties:
\begin{enumerate}
\item $X$ is a completely metrizable ANR;
\item each connected component of $X$ has density $\le\kappa$;
\item $X$ has the $\kappa$-discrete $m$-cells property for all $m<\w$; and
\item $X$ has LFAP.
\end{enumerate}
\end{theorem}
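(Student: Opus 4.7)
The necessity direction is direct verification. If $X$ is a manifold modeled on $l_2(\kappa)$, then (1) is immediate because $l_2(\kappa)$ is a completely metrizable convex ANR and these properties are local. For (2), each connected component of $X$ is a countable union of chart neighborhoods homeomorphic to open subsets of $l_2(\kappa)$, and therefore has density $\le\kappa$. Properties (3) and (4) are checked first in the model space $l_2(\kappa)$: LFAP holds by translating the identity by a locally finite family of vectors of small norm, and the $\kappa$-discrete $m$-cells property follows by perturbing the $\alpha$-th cube along a direction $e_\alpha$ from a fixed orthonormal family $\{e_\alpha\}_{\alpha<\kappa}$ with summable norms. These properties then transfer from the model to $X$ via a partition of unity subordinated to a chart cover; LFAP and the cells property, being formulated in terms of open covers, localize.

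The sufficiency direction is the substantive content. The overall plan is a back-and-forth construction producing a homeomorphism between $X$ and $l_2(\kappa)$ (or, when $X$ is not connected, between $X$ and a suitable topological sum, with (2) ensuring that the correct model is assigned to each component). The first step is to reduce to local charts: by (1) and (2) together with a Hanner-style characterization of ANRs, it suffices to show that every point of $X$ has a neighborhood homeomorphic to an open subset of $l_2(\kappa)$, and by the stable homeomorphism property of Hilbert space it further suffices to produce a homeomorphism $X\times l_2(\kappa)\to l_2(\kappa)$. The second step is the \emph{Z-set approximation lemma}: using (3), every map $f:K\to X$ from a locally finite countable-dimensional complex $K$ can be approximated arbitrarily closely by a map whose image is a Z-set in $X$, because the $\kappa$-discrete $m$-cells property converts finite-dimensional maps into maps with discrete (hence closed and locally negligible) image on each cube.

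The decisive third step builds two sequences of maps $f_n:X\to l_2(\kappa)$ and $g_n:l_2(\kappa)\to X$ whose compositions approximate the identities with errors tending to zero. At each stage LFAP (property (4)) is invoked on both $X$ and $l_2(\kappa)$ to supply maps close to the identity whose images are locally finite, hence Z-embeddings once combined with step two; the Z-set unknotting theorem of Anderson (extended by Toru\'nczyk to the non-separable setting) is then applied to merge the new approximation with the existing one while absorbing the error. The completeness provided by (1) and by $l_2(\kappa)$ guarantees that the two sequences converge uniformly, and a careful bookkeeping of the perturbation sizes ensures that the limits are mutually inverse homeomorphisms. The main obstacle is precisely this choreography: the perturbation at stage $n$ must be small enough to preserve convergence, yet large enough to correct the error accumulated at stage $n-1$, all while keeping the image of each newly constructed map a Z-set so that unknotting applies at the next stage. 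In the non-separable case one must additionally index the construction transfinitely over $\kappa$ and verify global continuity of the limit; a full argument follows Toru\'nczyk's original paper \cite{Tor81}, which, given its depth, we invoke as stated rather than reconstruct.
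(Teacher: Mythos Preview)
The paper does not prove this theorem at all: Theorem~\ref{tor1} is simply quoted as Toru\'nczyk's characterization \cite{Tor81} and used as a black box in Section~\ref{pf-main}. There is therefore no ``paper's own proof'' to compare your proposal against. Your sketch ultimately does the same thing---after outlining a strategy you explicitly ``invoke [the result] as stated rather than reconstruct''---so in substance your proposal and the paper agree: both cite \cite{Tor81}.

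That said, the outline you give before deferring to \cite{Tor81} contains a slip worth flagging. You write that ``it further suffices to produce a homeomorphism $X\times l_2(\kappa)\to l_2(\kappa)$''. This cannot be right as stated: if $X$ is, say, a disconnected or non-contractible $l_2(\kappa)$-manifold, then $X\times l_2(\kappa)$ is not homeomorphic to $l_2(\kappa)$. The relevant stability step in Toru\'nczyk's argument is rather that $X\cong X\times l_2(\kappa)$ (so that $X$ is an $l_2(\kappa)$-manifold iff $X\times l_2(\kappa)$ is), combined with a local/open-embedding argument showing that the product with the model admits charts. The back-and-forth via $Z$-set unknotting you describe is closer in spirit to proving that two $l_2(\kappa)$-manifolds of the same homotopy type are homeomorphic, not to the manifold recognition itself. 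If you intend to expand the sketch into a genuine proof, that reduction step needs to be reformulated.
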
 

For manifolds modeled on the separable Hilbert space $l_2$ this characterization can be simplified as follows:

\begin{theorem}[Toru\'nczyk]\label{tor2} A metrizable space $X$ is an $l_2$-manifold if and only if $X$ is a locally Polish ANR with the $\w$-discrete $\w$-cells property.
\end{theorem}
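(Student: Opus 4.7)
The necessity direction is routine: an $l_2$-manifold is locally Polish (since $l_2$ is Polish), an ANR (since $l_2$ is a convex metric space), and possesses the $\w$-discrete $\w$-cells property either by Theorem~\ref{tor1} with $\kappa=\w$ or by a direct $Z$-set approximation argument carried out in coordinate charts. The real content lies in the converse, which I plan to derive from Theorem~\ref{tor1} in the case $\kappa=\w$ by verifying its four hypotheses from the weaker assumption that $X$ is a locally Polish ANR with the $\w$-discrete $\w$-cells property.

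Conditions (1)--(3) of Theorem~\ref{tor1} should go by standard arguments. For (1), a metrizable locally completely-metrizable space is completely metrizable, because it is open in its closure inside any metric completion and hence a $G_\delta$ in a complete space. For (2), since $X$ is an ANR it is locally path-connected, so connected components are open; a connected locally separable metric space is separable via a chain argument, and a connected locally Polish space is therefore Polish. For (3), given $m<\w$, I would extend any $f:\w\times\II^m\to X$ to $\tilde f:\w\times\II^\w\to X$ through the coordinate projection $\II^\w\to\II^m$, approximate $\tilde f$ discretely using the $\w$-cells property, and restrict the result back to $\w\times\II^m$.

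The main obstacle is deducing LFAP. Since components of $X$ are open and inherit all the hypotheses, one may assume $X$ is Polish. Given an open cover $\U$, I would use the ANR property and a countable locally finite refinement of $\U$ with a subordinate partition of unity to manufacture a sequence of maps $f_n:X\to X$, each $\U$-near the identity and factoring through a compact set $K_n\subset X$ of small diameter supported in a distinct member of the refinement. I would then assemble these into a single map $F:\w\times\II^\w\to X$ whose slice $\{n\}\times\II^\w$ parametrizes (a neighborhood of) $K_n$, and invoke the $\w$-discrete $\w$-cells property to replace $F$ by a $\U$-close map $G$ with $\{G(\{n\}\times\II^\w)\}_{n\in\w}$ discrete in $X$, thereby producing perturbed maps $f_n'$ that remain $\U$-near the identity and whose image family is locally finite in $X$.

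The delicate point is engineering the original $f_n$ so that their images can be simultaneously parametrized by a single map from $\w\times\II^\w$ and so that applying the cells property slice-by-slice preserves the near-identity relation; this is essentially the technical heart of Toru\'nczyk's construction in \cite{Tor81} and is where the interplay between local Polish structure, ANR retractions, and the $\w$-discrete $\w$-cells property must be coordinated carefully. Once LFAP is in hand, Theorem~\ref{tor1} closes the argument.
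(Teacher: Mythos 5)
This statement is not proved in the paper at all: it is quoted as Toru\'nczyk's characterization of $l_2$-manifolds and cited to \cite{Tor81}, so any proof must essentially reproduce Toru\'nczyk's argument. Your plan to deduce it from Theorem~\ref{tor1} with $\kappa=\w$ handles the routine parts correctly (necessity; complete metrizability from local Polishness; separability of components; the $\w$-discrete $m$-cells property for $m<\w$ by factoring through the projection $\II^\w\to\II^m$ and restricting along a section), but the entire weight of the theorem then falls on deriving LFAP from the hypotheses, and this is exactly the step you do not carry out: you describe it yourself as ``the technical heart of Toru\'nczyk's construction'' and defer it to \cite{Tor81}. Deriving LFAP for a Polish ANR from the $\w$-discrete $\w$-cells property is not a bookkeeping matter; it is essentially equivalent in difficulty to the theorem being proved, so the proposal as written has a genuine gap at its central point.

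Moreover, the specific construction you sketch for LFAP cannot work. If $f_n:X\to X$ is $\U$-near the identity, then for every $x\in X$ the point $f_n(x)$ lies in a common member of $\U$ with $x$, so the image $f_n(X)$ meets the star of every point of $X$; in particular it cannot be contained in a compact set $K_n$ of small diameter unless $X$ itself is compact and small. Hence the maps you propose to assemble into $F:\w\times\II^\w\to X$ do not exist for noncompact $X$, and the subsequent application of the $\w$-discrete $\w$-cells property has nothing to act on. The images $f_n(X)$ in the definition of LFAP are necessarily ``$\U$-dense'' and only the indexed family $\{f_n(X)\}_{n\in\w}$ is required to be locally finite; producing such maps is precisely what requires the delicate absorbing/$Z$-set machinery of \cite{Tor81}, not a partition-of-unity assembly. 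To repair the write-up, either cite Theorem~\ref{tor2} as an external result (as the paper does) or reproduce Toru\'nczyk's actual argument for the separable case rather than routing it through Theorem~\ref{tor1}.
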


Thus the problem of recognition of Hilbert manifolds reduces to detecting the $\kappa$-discrete $m$-cells property. For spaces with LFAP the latter problem can be reduced to cardinals $\kappa$ with uncountable cofinality. The following lemma is proved in \cite{BZ}.

\begin{lemma}\label{l1} A paracompact space 
$X$ with $\w$-LFAP has $\kappa$-discrete $m$-cells property for a cardinal $\kappa$  if and only if  $X$ has the $\lambda$-discrete $m$-cells property for all cardinals $\lambda\le\kappa$ of uncountable cofinality.
\end{lemma}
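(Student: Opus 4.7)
The ``only if'' direction is routine: given $f:\lambda\times\II^m\to X$ with $\lambda\le\kappa$ and a cover $\U$, extend $f$ to $\tilde f:\kappa\times\II^m\to X$ by sending the extra coordinates to a fixed basepoint, apply the $\kappa$-discrete $m$-cells property to obtain $\tilde g$ that is $\U$-near $\tilde f$ with discrete image-family, and restrict back to $\lambda\times\II^m$; a subfamily of a discrete family is discrete.

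For the ``if'' direction, I split on the cofinality of $\kappa$. If $\mathrm{cf}(\kappa)$ is uncountable, then $\lambda=\kappa$ is already covered by the hypothesis and we are done. Otherwise write $\kappa=\bigsqcup_{n\in\w}A_n$ with $|A_n|=\lambda_n\le\kappa$, where each $\lambda_n$ has uncountable cofinality (taking $\lambda_n$ cofinal in $\kappa$ via successor cardinals below $\kappa$), or $\lambda_n=1$ when $\kappa=\w$ (in which case the $\lambda_n$-discrete $m$-cells property holds trivially).

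Given $f:\kappa\times\II^m\to X$ and an open cover $\U$, take a star-refinement $\mathcal V$ of $\U$ (using paracompactness) and apply LFAP to $\mathcal V$ to obtain maps $\phi_n:X\to X$, each $\mathcal V$-near the identity, with $\{\phi_n(X)\}_{n\in\w}$ locally finite. For each $n$, use the $\lambda_n$-discrete $m$-cells property to find $g_n:A_n\times\II^m\to X$ perturbing $f|_{A_n\times\II^m}$ so that $\{g_n(\{\alpha\}\times\II^m)\}_{\alpha\in A_n}$ is discrete, choosing the witnessing cover finer than the ``gaps'' produced by $\phi_n$ so that post-composition with $\phi_n$ does not collapse the discrete family within the block. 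Then $g(\alpha,t)=\phi_n(g_n(\alpha,t))$ for $\alpha\in A_n$ defines a map that is $\U$-near $f$ by the star-refinement property.

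The main obstacle is verifying that the combined family $\{g(\{\alpha\}\times\II^m)\}_{\alpha\in\kappa}$ is discrete rather than merely locally finite. Local finiteness across blocks follows from $\{\phi_n(X)\}_n$ being locally finite, and within-block discreteness is inherited from $g_n$; but images from different blocks can still share points. The delicate step is to simultaneously control the $\phi_n$ and the within-block discretization witnesses: in a paracompact Hausdorff space one uses normality to separate the finitely many closures of blocks meeting any given neighborhood by disjoint open expansions, and then shrinks the $\phi_n$ or the within-block covers so that each block's image lies inside its expansion, yielding genuine discreteness of the union. The case $\kappa=\w$ is essentially this base case, where the separation argument converts local finiteness of the countable family $\{\phi_n(f(n,\cdot)(\II^m))\}_n$ directly into discreteness.
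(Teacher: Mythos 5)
Your ``only if'' direction and the reduction of the ``if'' direction to cardinals of countable cofinality are fine, but the step you yourself flag as ``the main obstacle'' --- upgrading local finiteness to discreteness --- is precisely where the argument breaks, and the repair you sketch does not work. After setting $g(\alpha,t)=\phi_n(g_n(\alpha,t))$ for $\alpha\in A_n$, two things go wrong. Within a block: $\phi_n$ is only $\mathcal V$-near the identity for the one cover $\mathcal V$ to which LFAP was applied, so it may identify points of distinct cells of the discrete family $\{g_n(\{\alpha\}\times\II^m)\}_{\alpha\in A_n}$ no matter how fine a cover you use when constructing $g_n$: the $\lambda_n$-discrete cells property produces a discrete family but gives no control of its separation scale relative to the prescribed cover, so ``choosing the witnessing cover finer than the gaps produced by $\phi_n$'' has the quantifiers backwards. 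What would suffice is a map $\phi_n$ that is near the identity with respect to a star-refinement of a cover witnessing the discreteness of $g_n$'s family (then $\phi_n(D_\alpha)\subset \mathrm{st}(D_\alpha,\cdot)$ stays disjoint), but a single application of LFAP does not supply maps of different prescribed fineness for different $n$ while keeping $\{\phi_n(X)\}_{n\in\w}$ locally finite. Across blocks: for compact (hence closed) images, an indexed locally finite family is discrete if and only if sets with distinct indices are pairwise disjoint, and nothing in your construction makes images from different blocks disjoint --- the sets $\phi_n(X)$ given by LFAP may overlap arbitrarily. Your appeal to normality (``separate the finitely many closures of blocks meeting a given neighborhood by disjoint open expansions'') is unfounded, because those closures need not be disjoint; normality and collectionwise normality only separate families that are already disjoint or discrete. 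For the same reason your base case is wrong as stated: local finiteness of $\{\phi_n(f(n,\cdot)(\II^m))\}_{n\in\w}$ does not convert ``directly'' into discreteness.

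What is missing is an actual disjointification device. For $\kappa=\w$ there is an elementary one: since every compact set meets only finitely many members of the locally finite family $\{\phi_k(X)\}_{k\in\w}$, one can choose indices $k_0,k_1,\dots$ recursively so that $\phi_{k_n}(X)$ misses a neighborhood of the (compact) union of the images already constructed, and put $g(n,t)=\phi_{k_n}(f(n,t))$; pairwise disjointness plus local finiteness then gives discreteness. For singular $\kappa$ of countable cofinality the earlier blocks' images are no longer compact, and the interaction between the block-wise discretization and the LFAP maps must be organized much more carefully than in your sketch; note that the present paper does not reprove this lemma but quotes it from \cite{BZ}, and in the metric situation where the paper actually works, the quantitative $\kappa$-separated cells property of Lemmata~\ref{l3} and \ref{l4} is exactly the kind of uniform separation that your purely topological argument lacks. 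As written, your construction yields a family that is locally finite and discrete within each block, which is strictly weaker than the $\kappa$-discrete $m$-cells property.
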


In fact, the $\kappa$-discrete $m$-cells property follows from its metric counterpart called the $\kappa$-separated $m$-cells property.

Following \cite{BZ}, we define a metric space $(X,\rho)$ to have the {\em $\kappa$-separated $m$-cells property} if for every $\e>0$ there is $\delta>0$ such that for every map $f:\kappa\times \II^m\to X$ there is a map $g:\kappa\times \II^m\to X$ that is $\e$-homotopic to $f$ and such that $$\dist\big(g(\{\alpha\}\times \II^m),g(\{\beta\}\times \II^m)\big)\ge \delta$$ for all ordinals $\alpha<\beta<\kappa$.

The following lemma was proved in \cite{BZ} by the method of the proof of Lemma 1 in \cite{DT}.

\begin{lemma}\label{l2} Each metric space $X$ with the $\kappa$-separated $m$-cells property has the $\kappa$-discrete $m$-cells property.
\end{lemma}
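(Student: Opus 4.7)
The plan is to reduce an arbitrary open cover $\U$ of $X$ to a countable family of uniform-scale applications of the $\kappa$-separated $m$-cells property. First, using paracompactness of the metric space $(X,\rho)$, I construct a continuous $1$-Lipschitz function $\varphi:X\to(0,1)$ with the property that $B(x,4\varphi(x))$ is contained in some $U\in\U$ for every $x\in X$; this is standard via a locally finite open refinement of $\U$ and a subordinate partition of unity. Given a map $f:\kappa\times\II^m\to X$, compactness of $\{\alpha\}\times\II^m$ makes $\varepsilon_\alpha:=\inf_{t\in\II^m}\varphi(f(\alpha,t))$ strictly positive, so I split $\kappa=\bigsqcup_{n\in\w}A_n$ dyadically by setting $A_n:=\{\alpha<\kappa:2^{-n-1}<\varepsilon_\alpha\le 2^{-n}\}$.

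For each $n\in\w$ I apply the $\kappa$-separated $m$-cells property with threshold $\varepsilon_n:=2^{-n-3}$ to the restriction $f|_{A_n\times\II^m}$ (extended constantly on the complement if $|A_n|<\kappa$), obtaining a map $g_n:A_n\times\II^m\to X$ that is $\varepsilon_n$-homotopic to $f|_{A_n\times\II^m}$ and whose pieces $g_n(\{\alpha\}\times\II^m)$ are pairwise $\delta_n$-separated for some $\delta_n>0$. Gluing yields $g:=\bigsqcup_n g_n$. The choice $\varepsilon_n\le\tfrac14\varphi(f(\alpha,t))$ for $\alpha\in A_n$ places $f(\alpha,t)$ and $g(\alpha,t)$ inside a single ball $B(f(\alpha,t),4\varphi(f(\alpha,t)))\subset U\in\U$, so $g$ is $\U$-near to $f$.

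The main obstacle will be verifying that the assembled family $\{g(\{\alpha\}\times\II^m):\alpha<\kappa\}$ is globally discrete rather than merely $\delta_n$-separated inside each scale class $A_n$. Since $\varphi$ is $1$-Lipschitz, any point $z$ of a piece arising from $A_n$ satisfies $\varphi(z)\ge\varphi(f(\alpha,t))-\varepsilon_n\ge 2^{-n-2}$, so for a fixed $y\in X$ only scale classes with $n$ in a bounded window around $-\log_2\varphi(y)$ can put points into a sufficiently small ball about $y$. To also rule out very-small-scale pieces whose $f$-image nevertheless reaches the neighborhood of $y$ from a region where $\varphi$ is much larger than $\varepsilon_\alpha$, I will further refine the partition by dyadic levels of $\sup_{t\in\II^m}\varphi(f(\alpha,t))$ and shrink the per-class thresholds so that within each refined class the oscillation of $\varphi\circ f$ is bounded by a fixed constant. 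After the refinement only finitely many classes can meet a sufficiently small ball around $y$, and the per-class $\delta$-separation isolates the single piece that touches that ball. This combinatorial bookkeeping, not the application of the separated property itself, is the technical heart of the argument; it follows the pattern of Lemma~1 of \cite{DT}.
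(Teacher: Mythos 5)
Your first stage is fine as far as it goes: the Lipschitz gauge $\varphi$, the dyadic classes $A_n$ determined by $\e_\alpha=\inf_t\varphi(f(\alpha,t))$, the padding trick when $|A_n|<\kappa$, and the verification of $\U$-nearness are all correct. The proof breaks exactly at the step you defer to the end, and the fix you sketch there does not work. Refining the partition of the index set $\kappa$ by dyadic levels of $\sup_t\varphi(f(\alpha,t))$ cannot make ``the oscillation of $\varphi\circ f$ within each refined class bounded'': the oscillation is a property of an individual cell, and a single cell with $\inf_t\varphi(f(\alpha,t))\le 2^{-n}$ tiny but $\sup_t\varphi(f(\alpha,t))$ of order $1$ sits in one class of \emph{any} partition of $\kappa$, with large oscillation. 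Moreover the claim ``after the refinement only finitely many classes can meet a sufficiently small ball around $y$'' is false: your Lipschitz estimate only excludes the \emph{coarse} classes (pieces coming from cells with $\e_\alpha\approx 2^{-n}$ lie in $\{\varphi\ge 2^{-n-2}\}$, which near $y$ rules out small $n$), and the $\sup$-refinement only bounds the $\sup$-index; cells with arbitrarily small $\e_\alpha$ may still pass right next to $y$, since $\U$-nearness forces their perturbed images to follow them there. Such cells fall into pairwise distinct classes, so your construction never applies the separated property to any two of them jointly, their permitted uniform displacements $\e_n\le 2^{-n-3}$ tend to $0$, and the glued family can accumulate at $y$. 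No bookkeeping over a partition of $\kappa$ combined with \emph{uniform} per-class moves can repair this, because the real difficulty is a single cell straddling regions of very different scales: near $y$ one is allowed to move it by roughly $\varphi(y)$, but your scheme caps its total motion by the scale of its worst (finest) point.

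What the missing step requires is the extra strength of the hypothesis that you never use: the separated property produces an $\e$-\emph{homotopy} from $f$ to the separated map, not merely an $\e$-near map. A workable scheme (in the spirit of the argument from Lemma 1 of \cite{DT} to which the paper appeals, via \cite{BZ}) treats all of $\kappa\times\II^m$ at every scale $j$: apply the separated property with $\e_j$ chosen small relative to $2^{-j}$ and to the constants $\delta_1,\dots,\delta_{j-1}$ of earlier stages, and then use the homotopy to \emph{taper} the stage-$j$ move along each cell, performing it in full only where $\varphi\circ f\ge c\,2^{-j}$ and not at all where $\varphi\circ f\le c\,2^{-j-1}$. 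Each point of each cell then moves, over all stages combined, by an amount small compared with $\varphi(f(\alpha,t))$, so the limit map is $\U$-near to $f$; and near a point $y$ with $\varphi(y)\approx 2^{-j}$ the limit pieces differ from the stage-$j$ pieces by less than $\delta_j/4$, while the latter are $\delta_j$-separated there, so some ball about $y$ meets at most one limit piece. It is these variable-size, localized moves along a single cell --- unavailable to uniform per-class perturbations --- that make the discreteness argument go through, and your proposal has no substitute for them.
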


According to Lemma~6 of \cite{BZ}, the $\kappa$-separated $m$-cells property can be characterized as follows:

\begin{lemma}\label{l3} Let $m\le \w\le\kappa$ be two cardinals. A metric space $X$ has the $\kappa$-separated $m$-cells property if and only if for every $\e>0$ there is $\delta>0$ such that for every subset $A\subset X$ of cardinality $|A|<\kappa$, and every map $f:\II^d\to X$ of a cube of finite dimension $d\le m$ there is a map $g:\II^d\to X$ that is $\e$-homotopic to $f$ and has $\dist(g(\II^d),A)\ge\delta$.
\end{lemma}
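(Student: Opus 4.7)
The plan is to prove the two directions of the equivalence separately. For the ``$\kappa$-separated $\Rightarrow$ moving-away'' implication the idea is a pigeonhole argument: stacking $\kappa$ copies of a single map and invoking the $\kappa$-separated $m$-cells property produces $\kappa$ pairwise separated approximate copies, most of which must avoid any fixed set of cardinality $<\kappa$. For the converse the plan is a transfinite recursion, exploiting that images of cubes are compact (and hence admit finite $\delta/4$-nets) to keep the auxiliary subset of $X$ small enough at each stage.

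\emph{Forward direction.} Given $\e>0$, let $\delta_0>0$ be the constant supplied by the $\kappa$-separated $m$-cells property and put $\delta=\delta_0/3$. For a given $A\subset X$ with $|A|<\kappa$ and $f:\II^d\to X$ with $d\le m$, extend $f$ to $\tilde f:\II^m\to X$ through composition with a coordinate projection $\II^m\to\II^d$, and form the constant-in-$\alpha$ map $F:\kappa\times\II^m\to X$, $F(\alpha,x)=\tilde f(x)$. The $\kappa$-separated $m$-cells property furnishes $G:\kappa\times\II^m\to X$ that is $\e$-homotopic to $F$ and whose fibers $G(\{\alpha\}\times\II^m)$ are pairwise $\delta_0$-separated. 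The crucial observation is that for each $a\in A$ at most one ordinal $\alpha<\kappa$ can satisfy $G(\{\alpha\}\times\II^m)\cap B(a,\delta)\ne\emptyset$: otherwise two such fibers would contain points at distance $<2\delta<\delta_0$, contradicting separation. Thus fewer than $\kappa$ ordinals are ``blocked'', so some $\alpha$ has $\dist(G(\{\alpha\}\times\II^m),A)\ge\delta$; composing $G(\alpha,\cdot)$ with the natural inclusion $\iota:\II^d\hookrightarrow\II^m$ gives the desired $g$, and the $\e$-homotopy of $G$ to $F$ restricts over this fiber to an $\e$-homotopy of $g$ to $f$.

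\emph{Reverse direction.} Given $\e>0$, let $\delta>0$ be supplied by the moving-away hypothesis. Starting from an arbitrary $F:\kappa\times\II^m\to X$, construct $G:\kappa\times\II^m\to X$ by transfinite recursion: set $G(0,\cdot)=F(0,\cdot)$, and at stage $\alpha$, assuming that $G(\beta,\cdot)$ is already defined and $\e$-homotopic to $F(\beta,\cdot)$ with pairwise $(3\delta/4)$-separated images for all $\beta<\alpha$, exploit compactness of each $G(\beta,\II^m)$ (a continuous image of the compact cube $\II^m$) to select a finite $(\delta/4)$-net $A_\beta\subset G(\beta,\II^m)$ and put $A=\bigcup_{\beta<\alpha}A_\beta$. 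A routine cardinal estimate gives $|A|\le|\alpha|\cdot\aleph_0<\kappa$, using $\kappa\ge\w$ and $|\alpha|<\kappa$ (and when $\kappa=\w$ the ordinal $\alpha$ is finite, so $A$ is finite as well). Applying the moving-away hypothesis to $F(\alpha,\cdot):\II^m\to X$ and this $A$ produces $G(\alpha,\cdot)$ with $\dist(G(\alpha,\II^m),A)\ge\delta$; a triangle inequality against the nets $A_\beta$ then yields $\dist(G(\alpha,\II^m),G(\beta,\II^m))\ge 3\delta/4$ for every $\beta<\alpha$. The fiberwise $\e$-homotopies assemble into a single $\e$-homotopy $F\simeq G$ because $\kappa$ carries the discrete topology, verifying the $\kappa$-separated $m$-cells property with constant $3\delta/4$.

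I foresee no serious obstruction. The main delicacy is the bookkeeping between $\e$, $\delta$, and $\delta_0$ (the choice $\delta=\delta_0/3$ is precisely what makes the pigeonhole contradiction strict) and the uniform control of $|A|<\kappa$ at every stage of the transfinite recursion, which is what forces the use of finite $\delta/4$-nets in place of the full images $G(\beta,\II^m)$.
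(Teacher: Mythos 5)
Your forward direction is correct, and the skeleton of your reverse direction (transfinite recursion, finite $(\delta/4)$-nets of the compact images to keep the avoided set of size $<\kappa$, the triangle inequality giving $3\delta/4$-separation, and the fiberwise assembly of homotopies over the discrete index set $\kappa$) is sound. Note that the paper itself gives no proof of this statement --- it is quoted from Lemma~6 of \cite{BZ} --- so the comparison can only be with the natural intended argument, which yours essentially is.

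There is, however, a genuine gap in your reverse direction in the case $m=\w$, which is exactly the case the paper needs (the $\w$-separated and $\lambda$-separated $\w$-cells properties in the proof of Theorem~\ref{main} via Lemma~\ref{l4}). The moving-away hypothesis speaks only of maps $f:\II^d\to X$ of cubes of \emph{finite} dimension $d\le m$, yet at stage $\alpha$ you apply it directly to $F(\alpha,\cdot):\II^m\to X$; when $m=\w$ this is a map of the Hilbert cube, about which the hypothesis says nothing. This reduction from $\II^\w$ to finite-dimensional cubes is in fact the main content of the lemma: it is what allows $\II^{<\w}{\sim}$homogeneity (approximate invertibility only with respect to finite-dimensional cubes) to yield $\w$-cells properties. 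The repair is standard but must be stated: since $F(\alpha,\cdot)$ is uniformly continuous on the compact cube $\II^\w$, for every $\eta>0$ there is a finite $d$ such that $F(\alpha,\cdot)$ is $\eta$-homotopic to $F(\alpha,\cdot)\circ r_d$, where $r_d:\II^\w\to\II^\w$ is the retraction onto the $d$-dimensional face (compose $F(\alpha,\cdot)$ with the linear homotopy from the identity to $r_d$, whose tracks lie in sets of small diameter). Run your moving-away step with $\e/2$ in place of $\e$ on the induced map $\II^d\to X$ and the set $A$, then compose the output with the projection $\II^\w\to\II^d$: the concatenated homotopy is an $\e$-homotopy, the image of the new fiber is unchanged, and the rest of your recursion (nets, cardinality count, separation estimate) goes through verbatim. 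With this insertion, and with the forward direction as you wrote it, the proof is complete; for finite $m$ your argument needs no change.
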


\section{The $\kappa$-separated $m$-cells property in metric spaces}

In this section we shall establish the $\kappa$-separated $m$-cells property in $\II^m{\sim}$homo\-geneous metric spaces. A subset $S$ of a metric space $X$ is called {\em separated} if it is {\em $\e$-separated} for some $\e>0$. The latter means that $\dist(x,y)\ge\e$ for any distinct points $x,y\in S$. 

\begin{lemma}\label{l4} Let $m\le \w\le\kappa$ be two cardinals. An $\II^m{\sim}$homogeneous metric $\LC[<\w]$-space $X$ has the $\kappa$-separated $m$-cells property if each non-empty open subset of $X$ contains a separated subset of cardinality $\kappa$.
\end{lemma}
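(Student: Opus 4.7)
By Lemma~\ref{l3}, it suffices to show that for every $\e>0$ there is $\delta>0$ such that for every $d\le m$, every $f:\II^d\to X$, and every $A\subset X$ with $|A|<\kappa$, one can find a map $g:\II^d\to X$ which is $\e$-homotopic to $f$ and satisfies $\dist(g(\II^d),A)\ge \delta$. The plan is to exploit the $\II^m{\sim}$homogeneity to lift $f$ approximately into $\Iso(X)$ and then perturb it by a small continuous ``shift'' $\sigma(t)$ chosen to steer the image clear of $A$.

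Fix $\theta\in X$ for which $\alpha_\theta$ is $\II^m{\sim}$invertible. By the separation hypothesis the open ball $B(\theta,\e/6)$ contains an $\eta$-separated set $T$ with $|T|=\kappa$; put $\delta:=\eta/3$. Given $f:\II^d\to X$, extend $f$ over $\II^m$ via any retraction $\II^m\to\II^d$, and then apply $\II^m{\sim}$invertibility of $\alpha_\theta$ with respect to a sufficiently fine open cover of $X$ to produce a continuous lift $\tilde f:\II^m\to\Iso(X)$ with $d(\tilde f(t)(\theta),f(t))<\e/6$; restrict $\tilde f$ to $\II^d$. For each $a\in A$ let $h_a:\II^d\to X$ be defined by $h_a(t):=\tilde f(t)^{-1}(a)$.

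We seek $g$ of the form $g(t):=\tilde f(t)(\sigma(t))$ for some continuous $\sigma:\II^d\to B(\theta,\e/6)$. Since each $\tilde f(t)$ is an isometry, the condition $\dist(g(\II^d),A)\ge\delta$ becomes
\[
\sigma(t)\in C(t):=B(\theta,\e/6)\setminus\bigcup_{a\in A}\overline{B(h_a(t),\delta)}\quad\text{for every }t\in\II^d.
\]
Non-emptiness of $C(t)$ is immediate: each closed $\delta$-ball $\overline{B(h_a(t),\delta)}$ is an isometric copy of $\overline{B(\theta,\delta)}$, and since $T$ is $\eta$-separated with $\eta>2\delta$ it meets such a ball in at most one point, so $|T\cap C(t)|\ge\kappa-|A|=\kappa$.

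The main obstacle is the continuous selection of $\sigma$: the multivalued map $C$ is not manifestly lower semicontinuous in a form admitting off-the-shelf Michael selection, because the obstacles $\overline{B(h_a(t),\delta)}$ vary non-uniformly in $a\in A$ as $t$ moves. I would handle this by a skeleton-by-skeleton construction on a sufficiently fine triangulation of $\II^d$: at each vertex $v$ pick $\sigma(v)\in T\cap C(v)$ that lies at distance $>2\delta$ from every $h_a(v)$; then extend inductively across the $k$-skeleton by invoking the $\LC[<\w]$ property of $X$ inside the open sets $C(t)$, exploiting the fact that on a sufficiently fine $k$-simplex only a controllable family of obstacles (those $a\in A$ close to $f$ on the simplex) actively constrains the extension, whereby the problem reduces to null-homotoping a sphere in an open subset of $X$ punctured by finitely many closed balls. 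Once $\sigma$ is obtained, setting $g(t)=\tilde f(t)(\sigma(t))$ yields $d(g(t),f(t))<2\e/6<\e$ and $\dist(g(\II^d),A)\ge\delta$; the required $\e$-homotopy between $f$ and $g$ is the map $H(t,s):=\tilde f(t)(\gamma_t(s))$, where $\gamma_t\subset B(\theta,\e/6)$ is a path from $\theta$ to $\sigma(t)$ chosen continuously in $t$ via $\LC[<\w]$, each fiber $H(\{t\}\times[0,1])$ having diameter $\diam\gamma_t<\e$.
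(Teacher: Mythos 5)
Your setup (reduction via Lemma~\ref{l3}, the approximate lift $\tilde f:\II^d\to\Iso(X)$ of $f$, a $\kappa$-sized separated set $T$ near $\theta$ with $\delta$ one third of its separation constant, and the counting argument that $T\cap C(t)\neq\emptyset$) is sound and parallels the paper. But the step you yourself flag as the main obstacle --- producing a \emph{continuous} $\sigma$ --- is a genuine gap, and the skeleton-by-skeleton sketch does not close it. First, the claim that on a fine simplex only finitely many obstacles are active is unjustified: $A$ is only assumed to have cardinality $<\kappa$, so for uncountable $\kappa$ it may be countably infinite and accumulate along $f(\II^d)$, in which case infinitely many balls $\overline{B(h_a(t),\delta)}$ crowd into $B(\theta,\e/6)$ at every $t$. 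Second, even with finitely many obstacles, the $\LC[<\w]$ hypothesis only gives null-homotopies of small spheres in open subsets of $X$; it says nothing about null-homotoping spheres in the complement of closed $\delta$-balls, and nothing in the hypotheses (a metric $\LC[<\w]$ space with large separated sets in every open set) prevents such a complement from carrying essential spheres. Third, the final homotopy $H(t,s)=\tilde f(t)(\gamma_t(s))$ needs a continuous-in-$t$ choice of paths $\gamma_t$ (a second unaddressed selection problem), and in any case $H$ joins $g$ to $f'(t)=\tilde f(t)(\theta)$, not to $f$; to pass from $f'$ to $f$ one still needs the $\LC[<\w]$-fact (see \cite[V.5.1]{Hu}) that maps sufficiently near $f$ are $\e$-homotopic to $f$, with the lift taken that close --- this is why the paper settles for a $2\e$-homotopy, which is harmless.

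The idea you are missing is that no varying $\sigma$ is needed: a single \emph{constant} $s\in S$ works, i.e.\ one may take $g(t)=\tilde f(t)(s)$, so continuity is automatic and the $\e$-homotopy comes from one path of diameter $<\e$ joining $\theta$ to $s$ (take $T$ inside a ball whose radius is chosen by local path-connectedness, not just $\e/6$). The existence of such an $s$ is exactly where the paper splits into cases on $\kappa$. For uncountable $\kappa$: if every $s\in S$ failed, then to each $s$ one could assign a pair $(q_s,a_s)$ from a dense subset $Q\subset \tilde f(\II^d)\times A$ of cardinality $<\kappa$ with $\dist(\alpha(q_s,s),a_s)<\delta$; since $|Q|<\kappa\le|S|$, two distinct $s,t\in S$ share a pair, and then $3\delta\le\dist(s,t)=\dist(\alpha(q,s),\alpha(q,t))<2\delta$, a contradiction. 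For $\kappa=\w$: $A$ is finite, the set $K=\bigcup_{a\in A}\{x\in X:\exists y\in\tilde f(\II^d)\ \text{with}\ y(x)=a\}$ is compact by compactness of $\tilde f(\II^d)$, and any $s\in S$ with $\dist(s,K)\ge\delta$ does the job. If you wish to keep your fiberwise formulation, replace your selection step by this observation that the obstacle-avoidance condition can be satisfied by a constant section.
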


\begin{proof} Assume that each non-empty subset of $X$ contains a separated subset of cardinality $\kappa$.

According to Lemma~\ref{l3}, the $\kappa$-separated $m$-cells property of $X$ will follow as soon as given $\e>0$ we find $\delta>0$ such that for every subset $A\subset X$ of cardinality $|A|<\kappa$ and every map $f:\II^d\to X$ of a cube of finite dimension $d\le m$ there is a map $\tilde f:\II^d\to X$ which is $2\e$-homotopic to $f$ and such that $\dist(\tilde f(\II^d),A)\ge\delta$. 

Being $\II^m{\sim}$homogeneous, the space $X$ contains a point $\theta\in X$ such that the map $$\alpha_\theta:\Iso(X)\to X,\;\;\alpha_\theta:f\mapsto f(\theta),$$is $\II^m{\sim}$invertible. 

Being $\LC[<\w]$, the space $X$ is locally path-connected at $\theta$. Consequently, there is $\delta_1>0$ such that each point $y\in B(\theta,\delta_1)\subset X$ can be linked with $\theta$ by a path of diameter $<\e$. By our hypothesis,  the $\delta_1$-ball $B(\theta,\delta_1)$ contains a separated subset $S\subset B(\theta,\delta_1)$ of size $|S|=\kappa$. Since $S$ is separated, the number $$\delta=\frac13\inf\{\dist(s,t):s,t\in S,\; s\ne t\}$$is strictly positive. 

We claim that the number $\delta$ satisfies our requirements. Indeed, take any subset $A\subset X$ of cardinality $|A|<\kappa$ and fix any map $f:\II^d\to X$ from a cube of finite dimension $d\le m$. Since $X$ is $\LC[<\w]$, there is $\e'>0$ such that any map $f':\II^d\to X$ that is $\e'$-near to $f$ is $\e$-homotopic to $f$ (cf. \cite[V.5.1]{Hu}).

By our hypothesis, the map $\alpha_\theta$ is $\II^m{\sim}$invertible. Therefore
there is a map $g:\II^d\to \Iso(X)$ such that the composition $f'=\alpha_\theta\circ g$ is $\e'$-near to $f$. By the choice of $\e'$, the map $f'$ is $\e$-homotopic to $f$. 

We recall that $$\alpha:\Iso(X)\times X\to X,\;\;\alpha:(f,x)\mapsto f(x),$$denotes the action of the isometry group on $X$.

\begin{claim}\label{claim1} There is a point $s\in S$ such that $\dist(\alpha(g(\II^d)\times\{s\}),A)\ge\delta$. 
\end{claim}

The proof depends on the value of the cardinal $\kappa$. If $\kappa$ is uncountable, then we can fix a dense subset $Q\subset g(\II^d)\times A$ of cardinality $|Q|\le\dens(g(\II^d)\times A)\le\max\{\w,|A|\}<\kappa$.

Assuming that Claim~\ref{claim1} is false,  we could find
for every $s\in S$  a pair $(q_s,a_s)\in Q$ such that $\dist(\alpha(q_s,s),a_s)<\delta$. 

The strict inequality $|Q|<\kappa\le|S|$ implies the existence of two distinct points $s,t\in S$ with $(q_s,a_s)=(q_t,a_t)$. Let $x=q_s=q_t$ and observe that 
$$\begin{aligned}
3\delta\le&\dist(s,t)=\dist(\alpha(x,s),\alpha(x,t))\le\\
\le&\dist(\alpha(q_s,s),a_s)+\dist(a_t,\alpha(q_t,t))<2\delta,\end{aligned}$$ which is a contradiction, proving Claim~\ref{claim1} for an
uncountable $\kappa$.
\smallskip 

In case of a
countable $\kappa$ the argument is a bit different. In this case the set $A$ is finite. We claim that for every $a\in A$ the set  
$$K_a=\{x\in X:\exists y\in g(\II^d)\mbox{ with }\alpha(y,x)=a\}$$ is compact.
This will follow as soon as we check that each sequence $(x_n)_{n\in\w}\subset K_a$ has a cluster point $x_\infty\in K_a$.

For every $n\in\w$ find an isometry $y_n\in g(\II^d)\subset\Iso(X)$ such that $\alpha(y_n,x_n)=a$. By the compactness of $g(\II^d)$, the sequence $(y_n)$ has a cluster point $y_\infty\in g(\II^d)$. Observe that the point $x_\infty=y_\infty^{-1}(a)$ belongs to $K_a$. We claim that $x_\infty$ is a cluster point of the sequence $(x_n)$. 
Given any $\eta>0$ and $n\in\w$, we need to find $p\ge n$ such that $\dist(x_p,x_\infty)<\eta$. 
Since $y_\infty$ is a cluster point of $(y_i)$, there is a number $p\ge n$ such that $\dist(y_\infty(x_\infty),y_p(x_\infty))<\eta$. Then  
$$
\dist(x_p,x_\infty)=\dist(y_p(x_p),y_p(p_\infty))=\dist(a,y_p(x_\infty))=\dist(y_\infty(x_\infty),y_p(x_\infty))<\eta,
$$witnessing that the sets $B_a$, $a\in A$, are compact.

Since the union $K=\bigcup_{a\in A}K_a\subset X$ is compact and the set $S$ is $3\delta$-separated, there is a $s\in S$ such that $\dist(s,K)\ge \delta$. 

We claim that $\dist(\alpha(g(\II^d)\times \{s\}),A)\ge\delta$. Assuming the converse, we would find an isometry $y\in g(\II^d)$ such that $\dist(y(s),a)<\delta$ for some $a\in A$. Let $x=y^{-1}(a)$ and observe that $x\in K_a$ and hence $$\dist(s,K)\le \dist(s,x)=\dist(y(s),y(x))=\dist(y(s),a)<\delta,$$
which contradicts the choice of $s$. This completes the proof of Claim~\ref{claim1}.
\medskip

Define a map $\tilde f:\II^d\to X$ letting $\tilde f(x)=\alpha(g(x),s)$ for $x\in \II^d$. The choice of $s$ ensures that $\dist(\tilde f(\II^d),A)\ge\delta$.

By the choice of $\delta_1$ the point $s\in S\subset B(\theta,\delta_1)$ can be linked with $\theta$ by a path $\gamma:[0,1]\to X$ with $\gamma(0)=\theta$, $\gamma(1)=s$ and $\diam(\gamma[0,1])<\e$. This path allows us to define an $\e$-homotopy 
$$h:\II^d\times[0,1]\to X,\; h:(x,t)\mapsto\alpha(g(x),\gamma(t))$$linking the maps $f'=h_0$ and $\tilde f=h_1$.

Since the map $f'$ is $\e$-homotopic to $f$, we conclude that $\tilde f:\II^d\to X$ is a required map that is $2\e$-homotopic to $f$ and has property $\dist(\tilde f(\II^d),A)\ge\delta$.
\end{proof}

\section{Proof of Theorem~\ref{main}}\label{pf-main}

Let $X$ be an isometrically homogeneous $\II^{<\w}{\sim}$homogeneous metric space. Since each connected component of $X$ is isometrically homogeneous and $\II^{<\w}{\sim}$homoge\-neous, we lose no generality by
assuming that $X$ is connected.
\smallskip

(1) The first statement of Theorem~\ref{main} follows from Theorem~\ref{szenthe}.
\smallskip

(2) Assume that $X$ is a locally Polish ANR-space. We need to prove that $X$ is a manifold modeled on a separable Hilbert space. If the completion $\bar X$ is locally compact, then $X=\bar X$ is an Euclidean manifold according to Theorem~\ref{szenthe}. Therefore
we assume that $\bar X$ is not locally compact.

We claim that the space $X$ has the $\w$-separated $\w$-cells property. This will follow from Lemma~\ref{l4} as soon as we check that each non-empty open subset $U\subset X$ contains an infinite separated subset. Fix any point $x_0\in U$ and find $\e>0$ such that $B(x_0,2\e)\subset U$.

Since the complete metric space $\bar X$ is not locally compact, there is a point $x_1\in\bar X$ having no totally bounded neighborhood. Take any point $x_2\in X$ with $\dist(x_2,x_1)<\e$. 

Since the space $X$ is isometrically homogeneous, there is an isometry $f:X\to X$ such that $f(x_0)=x_2$. This isometry can be extended to an isometry $\bar f:\bar X\to\bar X$. Since the point $x_1$ has no totally bounded neighborhood, the ball $\bar B(x_1,\e)=\{x\in \bar X:\dist(x,x_1)\le\e\}$ contains an infinite separated subset $S\subset X\cap \bar B(x_1,\e)$. Since $\dist(x_1,f(x_0))<\e$, the set $S$ lies in the ball $B(f(x_0),2\e)$. Then $f^{-1}(S)$ is an infinite separated subset of the ball $B(x_0,2\e)\subset U$.

By Lemma~\ref{l2}, the space $X$ has the $\w$-discrete $\w$-cells property and by 
 Toru\'nczyk Theorem~\ref{tor2}, $X$ is an $l_2$-manifold.
\smallskip

(3) Assume that $X$ is a completely-metrizable ANR with LFAP. The isometric homogeneity of $X$ implies that any two connected components of $X$ are isometric.
 Let $\kappa$ be the density of any connected component of $X$. 
The homogeneity of $X$ implies that each non-empty open subset $U\subset X$ has density $\dens(U)\ge\kappa$ (cf. the proof of Corollary 3 in \cite{BZ}). Repeating the proof of Lemma 9 from \cite{BZ} we can also show that for each cardinal $\lambda\le\kappa$ of uncountable cofinality, 
each non-empty open subset $U\subset X$ contains a separated subset $S\subset U$ of cardinality $|S|\ge\lambda$. Applying Lemmata~\ref{l2} and \ref{l4}, 
we conclude that the space $X$ has the $\lambda$-discrete $\w$-cells property for every cardinal $\lambda\le\kappa$ of uncountable cofinality. Since $X$ has LFAP, $X$ has the $\kappa$-discrete $\w$-cells approximation property by Lemma~\ref{l1}. Finally, applying the Toru\'nczyk Characterization Theorem~\ref{tor1}, we conclude that $X$ is an $l_2(\kappa)$-manifold.

\section{Acknowledgements}

This research was supported by the Slovenian Research Agency grants
P1-0292-0101, 
J1-9643-0101, and 
J1-2057-0101. We thank the referee for the comments and suggestions.

\end{document}